\documentclass{paper}
\usepackage{a4wide}
\usepackage{cite}
\usepackage{amsmath}
\usepackage{amssymb}
\usepackage{amstext}
\usepackage{amsthm}
\usepackage{graphicx}
\usepackage{comment}
\usepackage{bbm}

\usepackage{tabularx}
\usepackage{tikz}
\usepackage{pgfplots}
\usepgfplotslibrary{external}
\usepackage{calc}

\usepackage{xurl}
\usepackage{hyperref}
\usepackage{subcaption}
\usepackage{mathtools}

\usepackage[ruled,vlined]{algorithm2e}

\newcommand{\R}{\mathbb{R}}

\newtheorem{proposition}{Proposition}[section]

\theoremstyle{definition}
\newtheorem{definition}{Definition}[section]
\newtheorem{remark}{Remark}[section]
\newtheorem{ex}{Example}[section]

\numberwithin{equation}{section}
\allowdisplaybreaks[4]

\title{A multi-class kinetic traffic flow model: discrete-velocity formulation and diffusively-corrected macroscopic limits}
\date{\today}
\author{C. Mezquita-Nieto\,\footnotemark[1] \footnotemark[2] \footnotemark[3] \and P. Goatin\,\footnotemark[2] \and A. Klar\,\footnotemark[1]}

\begin{document}

\maketitle
\renewcommand{\thefootnote}{\fnsymbol{footnote}}
\footnotetext[1]{RPTU Kaiserslautern, Department of Mathematics, 67663 Kaiserslautern, Germany (\{carmen.mezquitanieto, klar\}@rptu.de)}
\footnotetext[2]{Universit\'e C\^ote d'Azur, Inria, CNRS, LJAD, 2004 route des Lucioles - BP 93, 06902 Sophia Antipolis Cedex, France. E-mail: (\{carmen.mezquita-nieto, paola.goatin\}@inria.fr)}
\footnotetext[3]{Corresponding author.}

\begin{abstract}
	This paper introduces a multi-class extension of a discrete-velocity kinetic traffic flow model based on a non-local Prigogine-Herman framework. 
	We derive a hyperbolically scaled system of equations from a continuous kinetic formulation describing interactions between different vehicle classes through braking and relaxation terms.
	The model is then discretized with respect to the velocity variable for an arbitrary number of vehicle classes, and the structural properties of the resulting formulation are analyzed. 
	In particular, we prove hyperbolicity and total linear degeneracy. 
	Due to the  non-conservative structure of the model, we employ a path-conservative finite volume scheme for the numerical approximation of the system. 
	Finally, we derive the  corresponding diffusively-corrected macroscopic multi-class model, investigate its stability  and present numerical simulations on a single-lane road to illustrate the theoretical findings.
\end{abstract}

\textbf{Keywords:} Discrete-velocity model, multiclass kinetic traffic flow, relaxation system, totally linear degenerate hyperbolic equation, nonconservative products, path-conservative methods. 


\section{Introduction and motivation}
\label{sec_kin:introduction}

Traffic flow modeling has extensively employed both macroscopic and microscopic frameworks to capture the complex dynamics of vehicular networks. The foundational first-order Lighthill-Whitham-Richards (LWR) \cite{Light-Whit,Richards} model describes large-scale traffic phenomena through a scalar conservation law. To incorporate velocity dynamics and non-equilibrium behavior, second-order macroscopic models---such as the Payne-Whitham \cite{pay79, Whi74} and Aw-Rascle-Zhang \cite{AR, Zhang} models---introduce a momentum equation governed by a relaxation term toward an equilibrium velocity. While these macroscopic models efficiently describe large-scale traffic phenomena, they often lack the resolution to capture distinct driver interactions and velocity variances. Kinetic models bridge this gap by describing traffic dynamics through statistical distribution functions of vehicle positions and velocities. A common approach in this mesoscopic framework is the use of Bhatnagar-Gross-Krook (BGK)-type kinetic models \cite{BGK1954}, which describe the adaptation of driver velocities via relaxation operators toward a localized equilibrium distribution, see for example \cite{HPRV, KW97, KW00}. Within this context, the Prigogine-Herman kinetic model \cite{PH71} has served as a foundational framework, introducing relaxation and interaction terms to model how drivers adjust their speeds depending on their surroundings. The derivation of formal asymptotic limits and macroscopic model closures from such kinetic frameworks has been extensively investigated, see for instance \cite{Nel99, Sopasakis}.

In modern transportation networks, traffic is inherently heterogeneous, accounting for various vehicle types, such as passenger cars and heavy transport trucks, each characterized by distinct maximum speeds and acceleration profiles. While macroscopic traffic flow models have been widely treated in this setting, mainly through the generalization of the LWR model to the multi-species case \cite{Benzoni-Colombo, FanWork2015, WongWong2002}, extending single-class kinetic models to multi-class formulations remains essential. This approach is necessary for accurately capturing the intricate interactions between different vehicle classes, such as overtaking dynamics and class-specific braking behavior. Several recent works regarding multi-class kinetic models are present in \cite{BisiLoy24, Puppo16, Puppo17, Mendez2019}.

The mathematical formulation and numerical resolution of multi-class kinetic models present significant challenges. In the multi-class setting, interactions between different species often lead to systems of equations containing non-conservative products \cite{DalMaso1995DefinitionAW, LeFloch99NonconservProducts}, which complicates the definition of weak solutions across discontinuities. Consequently, standard conservative finite volume methods are insufficient, necessitating the use of path-conservative numerical schemes \cite{Pares06,Pares04Roe,Pares09,KolbeHerty24}. Furthermore, deriving the diffusively corrected macroscopic limit of these models yields systems governed by cross-diffusion. In this work, the structural stability of the resulting system is established following  \cite{DCMCLWR}, where stability criteria for a diffusively-corrected multi-class model are discussed to study the persistence of traffic wave structures.

To address these mathematical and numerical challenges, this paper proposes a novel multi-class discrete-velocity kinetic traffic flow model based on a modified non-local Prigogine-Herman framework, directly extending the single-class model presented in \cite{BK18, BK24discrete}. The original formulation possessed several desirable properties. In particular, it was a rich system \cite{peng, serre} that admitted a complete set of Riemann invariants and conservative variables. However, the extension of the model to multiple classes of vehicles results in the loss of several of these properties, making the mathematical formulation and numerical resolution significantly more complex.

The remainder of the article is organized as follows. Section \ref{sec_kin:kineticmodel} presents the continuous multi-class extension. In Section \ref{sec_kin:discretizationmodel}, we discretize the velocity space to derive a coupled system of equations for an arbitrary number of vehicle classes, and we rigorously analyze its structural properties, proving total linear degeneracy and identifying conservative variables. To handle the non-conservative terms arising from inter-class interactions, Section \ref{sec_kin:path_cons_methods} deploys a path-conservative finite volume scheme. Section \ref{sec_kin:stability_cont_model} derives the macroscopic limit of the model via a Chapman-Enskog expansion and provides a thorough stability analysis. Finally, Section \ref{sec_kin:numericalresults} presents the numerical algorithm and illustrative tests, and Section \ref{sec_kin:conclusions} discusses the results and future perspectives.

\section{A continuous kinetic model for multi-class  traffic flow}
\label{sec_kin:kineticmodel}

Our starting point is a multiclass modified version of the classical kinetic Prigogine-Herman equation for traffic flow, see \cite{KW97, KW981, KW00, Hel95B, Nel95, PF75, PH71}. We proceed similarly as in the single-class model \cite{BK18,BK24discrete}.

\subsection{Derivation of the continuous kinetic model}
\label{sec_kin:kineticmodel:derivation}

The kinetic model that will be treated is a multi-class extension of the traffic flow model presented in \cite{BK24discrete,BK24macro}. This approach stems from a microscopic model that accounts for braking interactions: a driver circulating along a stretch of road, situated at position $x$ with velocity $v$, reacts to the vehicle ahead of them at position $x+H$ with velocity $\hat{v}<v$, with $H>0$ denoting the minimum distance between vehicles.
The vehicle at position $x$ will adjust its velocity to match the one of its predecessor. Within this framework, $R=1/H$ is the maximum capacity of the road, that is, the maximum density of vehicles, and the interaction between the two vehicles, derived from the two-particle correlation function of vehicles in \cite{KW00}, is scaled by $(1-Hr)^{-1}$, where $r$ is the total density of all classes of vehicles. 

Let $t\in\R^+$, $x\in\R$ and $v\in[0,V]$. Take $N$ as the total number of vehicle classes considered. Then, the distribution functions $f^c=f^c(x,v,t)\ge 0$, with $f=(f^1,\ldots,f^N)$ follow the equation
\begin{align}
    \partial_t f^c + v\partial_x f^c = J_B^c(f) + J_A^c(f), \qquad c=1,\ldots,N.\label{eq_kin:kineticunscaled}
\end{align}
The functional $J^c_B$ denotes the interactions related to braking for a fixed class $c$ with respect to all other vehicle classes, including its own, while $J^c_A$ contains other interactions.
Regarding the definition of the braking term, following \cite{PH71,BKK,BK18,BK24discrete,BK24macro}, it is defined as 
\begin{align*}
    J_B^c(f)(x,v) = \frac{1}{1-Hr} \sum_{d=1}^N \Big(&\int_{\hat{v}>v} (\hat{v}-v) f^c(x,\hat{v}) f^d(x+H,v) d\hat{v}\\
    - &\int_{\hat{v}<v} (v-\hat{v}) f^c(x,v) f^d(x+H, \hat{v}) d\hat{v}\Big),
\end{align*}
with $r=r(x)=\sum_c \rho^c(x)=\int_0^{V}\sum_c f^c(x,v) dv\le R=\frac{1}{H}$, where $\rho^c$ denotes each individual class density, bounded by its respective maximal class density $R_c$, and $R$ denotes the maximum density over all maximum class densities, i.e., $R = \max \{ R_1, \ldots, R_N \}$.

Defining the distribution function $\tilde{f}(\tilde{x},\tilde{v},\tilde{t}) = \frac{V}{R} f(x,v,t) = H V f(x,v,t)$, with $\tilde{x}= \frac{x}{H}$, $\tilde{v} = \frac{v}{V}$ and $\tilde{t} =\frac{V}{H}t$, results in a modified \eqref{eq_kin:kineticunscaled} where, neglecting the $\sim$ notation,
\begin{align*}
    J_B^c(f)(x,v) = \frac{1}{1-r} \sum_{d=1}^N \Big(&\int_{\hat{v}>v} (\hat{v}-v) f^c(x,\hat{v}) f^d(x+1,v) d\hat{v}\\
    - &\int_{\hat{v}<v} (v-\hat{v}) f^c(x,v) f^d(x+1, \hat{v}) d\hat{v}\Big),
\end{align*}
with $t\in\R^+$, $x\in\R$, $v\in[0,1]$ and $0\le r\le 1$.

The class-specific braking term $J_B^c$ can now be divided into two: the first term is $J_B^{L,c}$, containing the local braking interactions (see \cite{PH71}), and $J_B^{NL,c}$ denoting the nonlocal ones. These can be seen as
\begin{align*}
    J_B^{L,c}(f)(x,v)=\frac{1}{1-r} \sum_{d=1}^N \Big(&\int_{\hat{v}>v} (\hat{v}-v) f^c(x,\hat{v}) f^d(x,v) d\hat{v}\\
    -&\int_{\hat{v}<v} (v-\hat{v}) f^c(x,v) f^d(x,\hat{v}) d\hat{v}\Big)
\end{align*}
and
\begin{equation*}
    \begin{aligned}
        J_B^{NL,c}(f)(x,v) &= J_B^c(f) - J_B^{L,c}(f)\\
        &
        \begin{aligned}
            \;= \frac{1}{1-r}\sum_{d=1}^N \Big(&\int_{\hat{v}>v} (\hat{v}-v) f^c(x,\hat{v})[f^d(x+1,v) - f^d(x,v)] d\hat{v}\\
            - &\int_{\hat{v}<v} (v-\hat{v}) f^c(x,v) [f^d(x+1,\hat{v}) - f^d(x,\hat{v})] d\hat{v}\Big).
        \end{aligned}
    \end{aligned}
\end{equation*}
Taking a hyperbolic space-time scaling, with $t\rightarrow t/\epsilon$ and $x \rightarrow x/\epsilon$, the kinetic equation is rewritten as
\begin{align}
    \partial_t f^c + v\partial_x f^c = \frac{1}{\epsilon} J_B^{NL,c}(f) + \frac{1}{\epsilon} \left(J_B^{L,c}(f) + J_A^c(f)\right), \label{eq_kin:kineticscaled_1}
\end{align}
with
\begin{align*}
    J_B^{NL,c}(f)(x,v)= \frac{1}{1-r}\sum_{d=1}^N \Big(&\int_{\hat{v}>v} (\hat{v}-v) f^c(x,\hat{v})[f^d(x+\epsilon,v) - f^d(x,v)] d\hat{v}\\
    - &\int_{\hat{v}<v} (v-\hat{v}) f^c(x,v) [f^d(x+\epsilon,\hat{v}) - f^d(x,\hat{v})] d\hat{v}\Big).
\end{align*}

\begin{remark}
    \label{rem_kin:kineticmodel:derivation:EnskogBoltzmann_scaling}
	In the fluid dynamic context, this corresponds to the classical hydrodynamic Enskog-Boltzmann scaling (see \cite{lacho}).
\end{remark}

Taking a Taylor approximation, it is possible to approximate the non-local term $J_B^{NL,c}(f)$ up to order $\mathcal{O}(\epsilon^2)$ by the term $\epsilon J^c(f,\partial_x f)$, with
\begin{align*}
    J^c(f,\partial_x f)(x,v) = \frac{1}{1-r} \sum_{d=1}^N \Big(&\int_{\hat{v}>v} (\hat{v}-v) f^c(x,\hat{v}) \partial_x f^d(x,v) d\hat{v}\\
    - &\int_{\hat{v}<v} (v-\hat{v}) f^c(x,v) \partial_x f^d(x,\hat{v}) d\hat{v}\Big),
\end{align*}
where $f^d(x+\epsilon,v) = f^d(x,v) + \epsilon \partial_x f^d(x,v) + \mathcal{O}(\epsilon^2)$, obtaining
\begin{align}
    \partial_t f^c + v\partial_x f^c - J^c(f,\partial_x f) = \frac{1}{\epsilon} \left(J_B^{L,c}(f) + J_A^c(f)\right). \label{eq_kin:kineticscaled_2}
\end{align}
In this article, following \cite{BK24discrete,BK24macro}, we will choose an approximation of the full-local term, corresponding to the right-hand side of the equation, by a relaxation term,
\begin{align*}
    J_B^{L,c}(f) + J_A^c(f) \approx J^c_R(f) = \frac{1}{T^c} \left(f^{e,c}(\rho,v) - f^c\right),
\end{align*}
where $f^{e,c}(\rho,v)\ge 0$ is an equilibrium function, with $\mathbf{\rho}=(\rho^1,\ldots,\rho^N)$, $\rho^c = \rho^c(x,t)$, for $0\le\rho^c\le 1$, $0\le v\le 1$ such that
\begin{equation*}
    \int f^{e,c}(\rho,v) dv = \rho^c = \int f^c(v) dv.
\end{equation*}
Altogether, we obtain the kinetic problem
\begin{align}
    \partial_t f^c + v\partial_x f^c - J^c(f, \partial_x f) = \frac{1}{\epsilon}J_R^c(f), \qquad c=1,\ldots,N, \label{eq_kin:kinbasic}
\end{align}
which will be the starting point of our investigations. Let the first and second moments of $f^{e,c}$ be denoted by $F^c$ and $E^c$, respectively, where
\begin{equation*}
    F^c(\rho) = \rho^c v_c(\rho) = \int vf^{e,c}(\rho,v)dv, \qquad E^c(\rho) = \int v^2 f^{e,c}(\rho,v)dv,
\end{equation*}
with $F^c(\rho)$ as the class-specific fundamental diagram. For this $F^c$, the $c$-equilibrium function $f^{e,c}$
must be chosen such that the class-velocity function $v_c(\mathbf{\rho}) = F^c(\mathbf{\rho})/\rho^c$ is monotone decreasing, with $v_c(0) = V_c\le 1$ and $v_c(1)=0$. Therefore, $F^c$ is a positive function, with $F^c(0) = F^c(1) = 0$ and $F^c(\rho)\le\rho^c$. 

\begin{ex}
    \label{ex_kin:cont_equil_fct}
    An example of a possible equilibrium function $f^{e,c}$ can be the one presented in \cite[Ex. 1]{BK18} applied to the multi-class case,
    \begin{equation}
        f^{e,c}(\rho,v) = \alpha_0^c(\rho) \delta_0(v) + (\rho - \alpha_0^c(\rho) - \alpha_1^c(\rho)) + \alpha_1^c(\rho) \delta_1(v), \label{eq_kin:cont_equil_fct}
    \end{equation}
    where 
    \begin{equation*}
        \alpha_0^c(\rho) = \rho^c - F^c - 3(F^c-E^c), \qquad \alpha_1^c(\rho) = F^c - 3(F^c-E^c).
    \end{equation*}
\end{ex}
For a simpler analytical discussion of the multi-class model, we use a change of variables.
We define 
\begin{equation}
    \bar{f}^c = f^1+\cdots+f^c, \qquad c=1,\ldots,N, \label{eq_kin:fbar_def_initial}
\end{equation}
i.e.,
\begin{equation}
    f^1 = \bar{f}^1 \qquad \text{and} \qquad f^c = \bar{f}^c - \bar{f}^{c-1}, \qquad c=2,\ldots,N. \label{eq_kin:fbar_def}
\end{equation}
Similarly, we will denote $\int \bar{f}^{e,c}dv = \bar{\rho}^c = \int \bar{f}^c dv$, where $\bar{f}^{e,c}$ denotes the equilibrium function associated to $\bar{f}^c$. In particular, as $\bar{f}^N=f^1+\cdots+f^N$, then $\int \bar{f}^Ndv = \bar{\rho}^N = r$. The terms $J^c(f,\partial_x f)$, for $c=1,\ldots,N$, would now be
\begin{equation*}
    J^1(f,\partial_x f) = J(\bar{f}^1,\partial_x \bar{f}^N) \qquad \text{and} \qquad
    J^c(f,\partial_x f) = J(\bar{f}^c-\bar{f}^{c-1}, \partial_x \bar{f}^N), \qquad c=2,\ldots,N,
\end{equation*}
where
\begin{align*}
    J(f,g)(x,v) = \frac{1}{1-r} \Big(&\int_{\hat{v}>v} (\hat{v}-v) f(x,\hat{v}) g(x,v) d\hat{v} - \int_{\hat{v}<v} (v-\hat{v}) f(x,v) g(x,\hat{v}) d\hat{v}\Big).
\end{align*}
This means that, for $c=1,\ldots,N$, $\sum_{k=1}^c J^k(f,\partial_x f) = J(\bar{f}^c, \partial_x \bar{f}^N)$. Therefore, 
 defining $\bar{J}^c_R(\bar{f}) = \sum_{k=1}^c J_R^k(f)$, the kinetic problem \eqref{eq_kin:kinbasic} is rewritten in terms of  the  variables \eqref{eq_kin:fbar_def} as
\begin{align}
    \partial_t \bar{f}^c + v\partial_x \bar{f}^c - J(\bar{f}^c, \partial_x \bar{f}^N) = \dfrac{1}{\epsilon} \bar{J}^c_R(\bar{f}), \qquad c=1,\ldots,N, \label{eq_kin:kinbasic_fbar}
\end{align}
with
\begin{align*}
    \bar{J}^c_R(\bar{f}) &= \dfrac{\bar{f}^{e,1} - \bar{f}^1}{T^1} + \sum_{k=2}^c \dfrac{(\bar{f}^{e,k}-\bar{f}^{e,k-1}) - (\bar{f}^k - \bar{f}^{k-1})}{T^k}\\
    &= \sum_{k=1}^{c-1} \left(\dfrac{1}{T^k}-\dfrac{1}{T^{k+1}}\right) (\bar{f}^{e,k} - \bar{f}^k) + \dfrac{\bar{f}^{e,c} - \bar{f}^c}{T^c}.
\end{align*}

\section{Discretization of the model}
\label{sec_kin:discretizationmodel}

Let us now consider discretized class densities $\rho^c$ such that
\begin{equation*}
    \sum_{\ell=0}^M f_{\ell}^c = \rho^c = \sum_{\ell=0}^M f_{\ell}^{e,c}(\rho), \qquad r = \sum_{c=1}^N \rho^c.
\end{equation*}
We can define a discrete velocity model for each class $c=1,\ldots,N$ with the distribution functions $f^c_i=f_i^c(x,t)\in[0,1]$, $i=0,\ldots,M$, and associated velocities $0=v_0<v_1<\cdots<v_{M-1}<v_M=1$. The discrete momentum is then $q^c = \sum_{i=0}^M v_i f_i^c$. Together with the equilibrium flux $F^c(\mathbf{\rho}) = \sum_{i=0}^M v_i f_i^{e,c}(\rho)$, the discrete velocity model is given by the system of kinetic equations
\begin{equation}
    \partial_t f_i^c + v_i \partial_x f_i^c - J^c_i(f,\partial_x f) = \frac{1}{\epsilon} J_R^{c,i}, \quad i=0,\ldots,M, \quad c=1,\ldots,N, \label{eq_kin:discretized_model_1}
\end{equation}
where $J^c_i(f,\partial_x f)$, $i=0,\ldots,M$, is associated to the discretization of $J^c(f,\partial_x f)$, and
\begin{align*}
    J_R^{c,i} = \frac{1}{T^c}(f_i^{e,c}(\mathbf{\rho}) - f_i^c).
\end{align*}
 $e^c$ and $E^c(\mathbf{\rho})$ are discretized as $e^c = \sum_{i=0}^M v_i^2 f_i^c$ and $E^c(\mathbf{\rho}) = \sum_{i=0}^M v_i^2 f_i^{e,c}(\rho)$, respectively.
\begin{ex}
    \label{ex_kin:discr_equil_fct}
    Following \eqref{eq_kin:cont_equil_fct}, let $v_i=i/M$ for $i=0,\ldots,M$. Then, the discrete equilibrium functions can be written, for each class $c=1,\ldots,N$, as
    \begin{equation}
        \begin{array}{rl}
            i=0: & f_0^{e,c}(\rho) = \rho^c - \displaystyle\sum_{i=1}^M f_i^{e,c}(\rho) = \rho^c - F^c - \dfrac{3M}{M+1}(F^c-E^c), \\[8pt]
            i=1,\ldots,M-1: & f_i^{e,c}(\rho) = \dfrac{6}{M-1} \dfrac{M}{M+1} (F^c-E^c), \\[8pt]
            i=M: & f_M^{e,c}(\rho) = F^c - \displaystyle\sum_{i=1}^{M-1} v_i f_i^{e,c}(\rho) = F^c - \dfrac{3M}{M+1} (F^c-E^c),
        \end{array}
        \label{eq_kin:discr_equil_fct}
    \end{equation}
    which corresponds to the discrete version of \eqref{eq_kin:cont_equil_fct}. In particular, for the 2-velocity case ($M=1$),
    \begin{equation*}
        f_0^{e,c}(\rho) = \rho^c - F^c, \qquad f_1^{e,c}(\rho) = F^c.
    \end{equation*}
\end{ex}
Analogously, we can define $\bar{\rho}^c$, $\bar{q}^c$, $\bar{F}^c(\bar{\rho})$, $\bar{e}^c$ and $\bar{E}^c(\bar{\rho})$. The discrete velocity model for the variables \eqref{eq_kin:fbar_def} would be
\begin{equation}
    \partial_t \bar{f}^c_i + v_i \partial_x \bar{f}_i^c - J_i(\bar{f}^c, \partial_x \bar{f}^N) = \dfrac{1}{\epsilon} \bar{J}_R^{c,i}, \quad i=0,\ldots,M, \quad c = 1,\ldots,N, \label{eq_kin:discretized_model_1_fbar}
\end{equation}
where $\bar{J}_R^{c,i}=\sum_{k=1}^c J_R^{k,i}$, with functions $\bar{f}^c_i=\bar{f}_i^c(x,t)\in[0,1]$. 

We can discretize the term $J(\bar{f}^c,\partial_x \bar{f}^N)$ in \eqref{eq_kin:kinbasic}, taking $I(\bar{f}^c,\partial_x \bar{f}^N)=(1-r)J(\bar{f}^c,\partial_x \bar{f}^N)$, as
\begin{equation}
    \begin{aligned}
        I_0(\bar{f}^c,\partial_x \bar{f}^N) &= \sum_{\ell=1}^M v_{\ell} \bar{f}_{\ell}^c \partial_x \bar{f}_0^N,\\
        I_i(\bar{f}^c,\partial_x \bar{f}^N) &= \sum_{\ell=i+1}^M (v_{\ell}-v_i) \bar{f}^c_{\ell} \partial_x \bar{f}_i^N - \sum_{\ell=0}^{i-1} (v_i-v_{\ell}) \bar{f}_i^c \partial_x \bar{f}_{\ell}^N, \qquad i=1,\ldots,M-1,\\
        I_M(\bar{f}^c,\partial_x \bar{f}^N) &= -\sum_{\ell=0}^{M-1} (1-v_{\ell}) \bar{f}_M^c \partial_x \bar{f}_{\ell}^N.
    \end{aligned}
    \label{eq_kin:interaction_term}
\end{equation}
Note that, for each equation $i=0,\ldots,M$ for a fixed class $c$, the partial derivatives with respect to $x$ depend only on distribution functions associated to class $N$. Defining $U$ and $\bar{J}_R(U)$ as
\begin{equation*}
    U = (U^1,\ldots,U^N)^T, \qquad U^c=(\bar{f}_0^c,\ldots,\bar{f}_M^c)^T
\end{equation*}
and
\begin{equation*}
    \bar{J}_R(U)=(\bar{J}_R^1,\ldots,\bar{J}_R^N)^T, \qquad \bar{J}_R^c=(\bar{J}_R^{c,0},\ldots,\bar{J}_R^{c,M})^T,
\end{equation*}
respectively, the system can be rewritten as
\begin{equation}
    \partial_t U + A(U)\partial_x U = \dfrac{1}{\epsilon} \bar{J}_R(U) \label{eq_kin:discretized_model_system_fbar}
\end{equation}
with $A(U)\in \mathcal{M}_{N(M+1)\times N(M+1)}(\mathbb{R})$ defined as
\begin{equation*}
    A(U) = 
    \begin{pmatrix}
        V & 0 & \cdots & 0 & A^1(U)\\
        0 & V & \cdots & 0 & A^2(U)\\
        \vdots & \vdots & \ddots & \vdots & \vdots\\
        0 & 0 & \cdots & V & A^{N-1}(U)\\
        0 & 0 & \cdots & 0 & A^N(U) + V
    \end{pmatrix}
\end{equation*}
with $V = v I_{M+1}$, where $v=(v_0,\ldots,v_M)$ is the velocity vector and $I_{M+1}$ is the identity matrix. The submatrices $A^c = A^c(U)$, $c=1,\ldots,N$, are defined as $A^c = (a_{i,j}^c)$, 
where the coefficients $a_{i,j}^c$ are, for $i,j\in\lbrace 0,\ldots,M\rbrace$,
\begin{equation*}
    \begin{array}{ll}
        a_{i,i}^c = -\dfrac{1}{1-r}\displaystyle\sum_{\ell=i+1}^M (v_{\ell}-v_i) \bar{f}_{\ell}^c, & \qquad a_{i,j}^c = \dfrac{1}{1-r} (v_i-v_j) \bar{f}_i^c \;\; \text{for} \;\; j<i, \\
        a_{i,j}^c = 0 \;\; \text{for} \;\; j>i, & \qquad a_{M,M}^c=0.
    \end{array}
\end{equation*}
In particular,
\begin{equation*}
    a_{0,0}^c = - \dfrac{\bar{q}^c}{1-r}.
\end{equation*}

\subsection{Properties of the discrete model for N classes}
\label{sec_kin:discretizationmodel:properties_discr_model}

We will now study the properties of the hyperbolic part of \eqref{eq_kin:discretized_model_system_fbar}. 
\begin{proposition}
    \label{prop_kin:eigenvalues_discrete_Nc_fbar}
    For $N,M\geq 1$, consider the system of equations \eqref{eq_kin:discretized_model_system_fbar}. Then, the roots of the characteristic polynomial associated to the matrix of the system $A(U)$ are
    \begin{equation}
        \begin{array}{lll}
            \lambda_i^c = v_i, & i=0,\ldots,M-1, & c=1,\ldots,N-1,\\
            \lambda_i^N = v_i - \dfrac{1}{1-r} \displaystyle \sum_{\ell=i+1}^M (v_{\ell}-v_i) \bar{f}_{\ell}^N, & i=0,\ldots,M-1,\\
            \lambda_M^c = 1, & c = 1,\ldots,N.
        \end{array}
        \label{eq_kin:eigenvalues_discrete_Nc_fbar}
    \end{equation}
\end{proposition}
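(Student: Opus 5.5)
The matrix $A(U)$ is block upper triangular with respect to the partition into the $N$ class blocks of size $M+1$. The diagonal blocks are $V$ (repeated $N-1$ times) and the last block $A^N(U)+V$. The plan is therefore to compute the characteristic polynomial as a product of the characteristic polynomials of the diagonal blocks,
\begin{equation*}
\det\big(A(U)-\lambda I\big)=\det\big(V-\lambda I_{M+1}\big)^{N-1}\,\det\big(A^N(U)+V-\lambda I_{M+1}\big).
\end{equation*}
This follows from the standard determinant formula for block triangular matrices, e.g.\ by Laplace expansion along the first block column, whose only nonzero entries lie in the diagonal block $V$.

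For the first $N-1$ factors, $V=\mathrm{diag}(v_0,\ldots,v_M)$ (interpreting $vI_{M+1}$ as the diagonal matrix of velocities). Its roots are therefore $v_0,\ldots,v_M$, giving $\lambda_i^c=v_i$ for $i=0,\ldots,M$ and $c=1,\ldots,N-1$. In particular, the $i=M$ root is $v_M=1$, which accounts for $\lambda_M^c=1$ for $c<N$.

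For the last factor, note that by the definition of the coefficients, $a_{i,j}^N=0$ for $j>i$. Hence $A^N(U)$ is lower triangular, and so is $A^N(U)+V$. Its eigenvalues are thus its diagonal entries
\begin{equation*}
v_i+a_{i,i}^N=v_i-\frac{1}{1-r}\sum_{\ell=i+1}^M (v_\ell-v_i)\bar f_\ell^N, \qquad i=0,\ldots,M-1,
\end{equation*}
together with $v_M+a_{M,M}^N=1+0=1$, which is $\lambda_M^N$.

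Combining the three lists gives exactly the set in \eqref{eq_kin:eigenvalues_discrete_Nc_fbar}, with multiplicities. The total count is $(N-1)(M+1)+(M+1)=N(M+1)$, matching the size of $A(U)$.

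There is no real obstacle here. The only points needing care are:
\begin{itemize}
\item confirming that the block structure is genuinely upper triangular, i.e.\ the last block row contains zeros in all columns except the last;
\item confirming that the lower triangularity of $A^N$ follows directly from the stated convention $a^c_{i,j}=0$ for $j>i$, which matches the discretization \eqref{eq_kin:interaction_term}, where equation $i$ only involves $\partial_x\bar f^N_\ell$ for $\ell\le i$.
\end{itemize}
It is also worth remarking that $r<1$ is needed for the formulas to be well defined.
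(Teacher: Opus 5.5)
Your proposal is correct and follows the paper's proof: you factor the characteristic polynomial via the block upper-triangular structure as $\det(V-\lambda I_{M+1})^{N-1}\det(A^N+V-\lambda I_{M+1})$, and then read the remaining roots off the diagonal of the lower-triangular block $A^N+V$, using $a_{M,M}^N=0$. Your extra checks (the multiplicity count $N(M+1)$, lower triangularity following from the discretization \eqref{eq_kin:interaction_term}, and the need for $r<1$) are details the paper leaves implicit.
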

\begin{proof}
    It is straightforward to find the corresponding characteristic polynomial using the properties of determinants of block matrices and the fact that $A^N + V$ is lower triangular:
    \begin{align*}
        p_{A(U)}(\lambda) &= \det(A(U) - \lambda I_{N(M+1)})\\
        &= \det
        \begin{pmatrix}
            V-\lambda I_{M+1} & 0 & \cdots & 0 & A^1\\
            0 & V-\lambda I_{M+1} & \cdots & 0 & A^2\\
            \vdots & \vdots & \ddots & \vdots & \vdots\\
            0 & 0 & \cdots & V-\lambda I_{M+1} & A^{N-1}\\
            0 & 0 & \cdots & 0 & A^N + V -\lambda I_{M+1}
        \end{pmatrix}\\
        &= \det(V-\lambda I_{M+1})^{N-1} \cdot \det(A^N + V-\lambda I_{M+1})\\
        &= (1-\lambda)^N \prod_{i=0}^{M-1}(v_i-\lambda)^{N-1} \prod_{i=0}^{M-1}(v_i + a_{i,i}^N - \lambda)\\
        &= (1-\lambda)^N \prod_{i=0}^{M-1}(v_i-\lambda)^{N-1} \prod_{i=0}^{M-1} \left(v_i - \dfrac{1}{1-r} \displaystyle \sum_{\ell=i+1}^M (v_{\ell}-v_i) \bar{f}_{\ell}^N - \lambda\right).
    \end{align*}
\end{proof}
There are multiple eigenvalues depending on the state of the system, so hyperbolicity is not obvious at the present stage. We also observe that, for $i<j$ and $c=1,\ldots,N-1$, $\lambda_i^N\neq \lambda_j^c$. 

Following the single-class case, the natural extension for the physical domain of definition is
\begin{align*}
	\Delta_M^N = \{ \bar{f} = (\bar{f}^1,\ldots,\bar{f}^N) &\text{ with } \bar f ^c = (\bar{f}_0^c,\ldots,\bar{f}_M^c)\in\mathbb{R}^{M+1}, \; c=1,\ldots,N \; |\\
	&0\leq\bar{f}_i^1\leq\cdots\leq\bar{f}_i^N, \; \bar{\rho}^c=\sum_{i=0}^M \bar{f}_i^c, \; \bar{\rho}^N=r\leq 1 \}.
\end{align*}

\begin{proposition}
	\label{propinv}
    Under the condition $\bar{f}\in\Delta_M^N$, the system matrix $A(U)$ is diagonalizable, and therefore the system is hyperbolic.
\end{proposition}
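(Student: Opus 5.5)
The plan is to exploit the block upper triangular structure of $A(U)$ and construct a full set of $N(M+1)$ linearly independent eigenvectors explicitly. Write
\begin{equation*}
A(U)=\begin{pmatrix} D & C\\ 0 & B\end{pmatrix},
\end{equation*}
where $D=\mathrm{diag}(V,\ldots,V)$ has $N-1$ copies of $V$, $C=(A^1,\ldots,A^{N-1})^T$ and $B=A^N+V$. The matrix $D$ is diagonal, so the vectors $(e_i^{(c)},0)$, with $e_i$ in the $c$-th block for $c\le N-1$, give $(N-1)(M+1)$ independent eigenvectors with eigenvalues $v_i$.

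\textbf{Step 1: the block $B$.} The block $B$ is exactly the single-class matrix built from $\bar f^N$, and $\bar f^N$ lies in the single-class physical domain $\Delta_M^1$. I would therefore invoke the single-class diagonalizability result of \cite{BK24discrete} to obtain a basis $w^0,\ldots,w^M$ of eigenvectors of $B$, with $Bw^i=\lambda_i^N w^i$. Since $B$ is lower triangular, these can be chosen so that $w^i_k=0$ for $k<i$. I would double-check this support property directly by forward substitution, including the case of repeated $\lambda^N$'s, where the choice of basis inside an eigenspace matters. For $i=M$ we can take $w^M=e_M$, because column $M$ of $B$ is $e_M$: indeed $a^N_{k,M}=0$ for $k<M$ and $a^N_{M,M}=0$.

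\textbf{Step 2: lifting each $w^i$.} Each $w^i$ is lifted to an eigenvector $(x^1,\ldots,x^{N-1},w^i)$ of $A$. This requires solving
\begin{equation*}
(V-\lambda_i^N I)\,x^c=-A^c w^i,\qquad c=1,\ldots,N-1 .
\end{equation*}
If $\lambda_i^N\neq v_j$ for all $j$, the matrix $V-\lambda_i^N I$ is invertible and there is nothing to prove. The only obstruction is a coincidence $\lambda_i^N=v_j$. In that case we need component $j$ of $A^cw^i$ to vanish, and we may then set $x^c_j=0$. Since $\lambda_i^N\le v_i$, and the remark after Proposition \ref{prop_kin:eigenvalues_discrete_Nc_fbar} excludes coincidences with larger velocities, only $j\le i$ can occur.

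\textbf{Step 3: the cases $j<i$ and $j=i<M$.}
\begin{itemize}
\item For $j<i$: $A^c$ is lower triangular, so $(A^cw^i)_j=\sum_{k\le j}a^c_{j,k}w^i_k=0$, because $w^i_k=0$ for $k<i$.
\item For $j=i<M$: equality $\lambda_i^N=v_i$ forces $\bar f_\ell^N=0$ for all $\ell>i$. The ordering $0\le\bar f^c_\ell\le\bar f^N_\ell$ in $\Delta_M^N$ then gives $\bar f^c_\ell=0$, hence $a^c_{i,i}=0$. As before, $(A^cw^i)_i=a^c_{i,i}w^i_i=0$.
\end{itemize}
This is precisely where the hypothesis $\bar f\in\Delta_M^N$ enters; the bound $r\le1$ enters through $\lambda_i^N$ being well defined.

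\textbf{Step 4: the case $i=M$.} Here $(A^ce_M)$ is column $M$ of $A^c$, which is zero. So the eigenvector is simply $(0,\ldots,0,e_M)$, even though row $M$ of $A^c$ is nonzero.

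\textbf{Step 5: independence.} The lifted vectors have last block $w^i$, and these last blocks are independent. Together with the $(N-1)(M+1)$ vectors from $D$, whose last block is $0$, we obtain $N(M+1)$ independent eigenvectors. Hence $A(U)$ is diagonalizable, and its eigenvalues are real by Proposition \ref{prop_kin:eigenvalues_discrete_Nc_fbar}.

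\textbf{Main obstacle.} I expect the hardest step to be Step 1 together with the support property: ensuring $B$ is diagonalizable when several $\lambda_i^N$ coincide. If the single-class citation does not cover this cleanly, I would first try proving it the same way as Step 3. For a repeated diagonal entry $\lambda_k=\lambda_i$ with $k<i$, the obstruction is the off-diagonal entry $a^N_{i,k}=(v_i-v_k)\bar f_i^N/(1-r)$ and the corresponding lower entries. I would show that coincidence forces enough of the $\bar f^N_\ell$ to vanish that the generalized eigenvector equations remain consistent.
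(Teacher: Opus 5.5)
Your proposal is correct and is essentially the paper's proof. Both use unit eigenvectors for the $V$ blocks, lift each eigenvector of $A^N+V$ by solving the singular diagonal systems $(\lambda_i^N I-V)x^c=A^c x_i^N$ via the lower triangularity of $A^c$, the zeros of $x_i^N$ above position $i$, and the ordering $\bar f_\ell^c\le\bar f_\ell^N$ when $\lambda_i^N=v_i$, and read off independence from the last block.

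The obstacle you flag does not occur. The paper uses the explicit single-class eigenvectors $x_i^N$ (entry $1$ at position $i$, zeros above), which are valid for every admissible state. Moreover, for $k<i$ one has $\lambda_i^N-\lambda_k^N\ge v_i-v_k>0$, so $A^N+V$ always has simple eigenvalues, and the support property follows by forward substitution.
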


\begin{remark}
	\label{rem_inv1}
Note that the condition $\bar{f}\in\Delta_M^N$ is an assumption and not a property of the solutions of the equation, see Remark \ref{inv2}
\end{remark}

\begin{proof}[Proof of Proposition \ref{propinv}]
    To compute the eigenvectors, one observes directly that the eigenvectors associated to $\lambda_i^c=v_i$, $c=1,\ldots,N-1$, $i=0,\ldots,M$, are given by
    \begin{equation*}
        \mathbf{R}_i^c = (0,\ldots,0,e_i^c,0,\ldots,0)^T\in\mathbb{R}^{N(M+1)},
    \end{equation*}
    with $0\in\mathbb{R}^{M+1}$, and $e_i^c\in\mathbb{R}^{M+1}$ the unit vector associated to class $c$, i.e., the only non-zero element of $\mathbf{R}_i^c$ is in the position $c(i+1)$. The eigenvectors $\mathbf{X}_i=\mathbf{R}_i^N$ associated to the eigenvalues $\lambda_i^N$, $i=0,\ldots,M+1$, are given by
    \begin{equation*}
        \mathbf{X}_i = (x_i^1,\ldots,x_i^N)^T\in\mathbb{R}^{N(M+1)},
    \end{equation*}
    where $x_i^N\in\mathbb{R}^{M+1}$ are the eigenvectors to $A^N + V$ for the eigenvalues $\lambda_i^N$, given by
    \begin{equation*}
        x_i^N = (0,\ldots,0,1,x_i^{N,i+1},\ldots,x_i^{N,j},\ldots,x_i^{N,M})^T,
    \end{equation*}
    with
    \begin{equation*}
        x_i^{N,j} = - \dfrac{\bar{f}_j^N}{1-\sum_{\ell=0}^i \bar{f}_{\ell}^N}
    \end{equation*}
    for $j=i+1,\ldots,M$.  This is the same result as in the one-class case\cite{BK18}. For $c=1,\ldots,N-1$, $x_i^c$ is computed by
    \begin{equation*}
        V x_i^c + A^c x_i^N = \lambda_i^N x_i^c.
    \end{equation*}
    In other words, to guarantee the hyperbolicity of the system, we have to make sure that the linear system
    \begin{equation}
        (\lambda_i^N I_{M+1} - V) x_i^c = A^c x_i^N, \label{eq_kin:linsyst}
    \end{equation}
    with singular matrix $\lambda_i^N I_{M+1} - V$, has a solution. This is observed as follows. Due to the lower triangular form of $A^c$, we have
    \begin{equation}
        A^c x_i^N = \left( 0,\ldots,0, (A^c x_i^N)^i, (A^c x_i^N)^{i+1}, \ldots, (A^c x_i^N)^j, \ldots, (A^c x_i^N)^M \right)^T. \label{eq_kin:linsys_hyperbolicity}
    \end{equation}
    Assuming $\bar{f}\in\Delta_M^N$, we have that the eigenvalue $\lambda_i^N$, $i=0,\ldots,M$, satisfies
    \begin{equation*}
    	\lambda_i^N = v_i - \dfrac{1}{1-r} \sum_{\ell=i+1}^M (v_{\ell}-v_i) \bar{f}_{\ell}^N \leq v_i.
    \end{equation*}
    It immediately follows that $\lambda_i^N<v_j$ for all $j>i$. However, 0 can only be one of the first $i$ diagonal elements of the matrix $\lambda_i^N I_{M+1} - V$. Therefore, in case $\lambda_i^N<v_i$, a solution of \eqref{eq_kin:linsyst} is given by
    \begin{equation*}
        x_i^c = \left( 0,\ldots,0, \frac{(A^c x_i^N)^i}{\lambda_i^N-v_i}, \frac{(A^c x_i^N)^{i+1}}{\lambda_i^N-v_{i+1}}, \ldots, \frac{(A^c x_i^N)^j}{\lambda_i^N-v_j}, \ldots, \frac{(A^c x_i^N)^M}{\lambda_i^N-v_M} \right)^T.
    \end{equation*}
    with
    \begin{equation*}
        x_i^{c,j} =
        \begin{cases}
            0 & \text{ for } j<i,\\
            \dfrac{a_{i,i}^c}{\lambda_i^N-v_i} & \text{ for } j=i,\\[3pt]
            \dfrac{1}{\lambda_i^N-v_j} \left( a_{j,i}^c + \displaystyle\sum_{k=i+1}^j a_{j,k}^c x_i^{N,k}  \right) & \text{ for } j=i+1,\ldots,M.
        \end{cases}
    \end{equation*}
    The case $\lambda_i^N=v_i$ requires that $\bar{f}^N_{\ell}=0$, $\ell>i$. Since by assumption $\bar{f}\in\Delta_M^N$, we have that, for all $c$, one obtains $\bar{f}_{\ell}^c=0$, $\ell>i$. Thus, in case $\lambda_i^N=v_i$, since every $\bar{f}_{\ell}^c$, $\ell>i$, is equal to zero, the $i$-th component of $A^c x_i^N$, i.e., $a_{i,i}^c$, is also zero. Choosing in this case the $i$-th component of the solution $x_i^c$ to be equal to $0$ gives again a valid solution.
    
    We will now prove that the full set of eigenvectors
    \begin{equation*}
        \{ \mathbf{R}_i^1,\ldots, \mathbf{R}_i^{N-1}, \mathbf{X}_i, \; i=0,\ldots,M\}
    \end{equation*}
    is linearly independent. Consider the combination
    \begin{equation*}
        \sum_{c=1}^{N-1} \sum_{i=0}^M \alpha_i^c \mathbf{R}_i^c + \sum_{i=0}^M \beta_i \mathbf{X}_i = 0.
    \end{equation*}
    For each $i$, the unit eigenvectors $\mathbf{R}_i^c$, $c<N$, have zero entries in the $N$-th block, while $\mathbf{X}_i$ contains the vector $x_i^N$. Therefore, for block $N$, $\sum_i \beta_i x_i^N = 0$. The $x_i^N$ are linearly independent by \cite{BK24discrete}, so $\beta_i=0$ for every $i$. By definition of the eigenvectors $\mathbf{R}_i^c$, this directly gives $\alpha_i^c=0$ for all $i$ and $c$. Thus, the full set of $N(M+1)$ eigenvectors is linearly independent and the system is hyperbolic.
\end{proof}

\begin{remark}
	\label{rem_kin:linear_degeneracy}
	The total linear degeneracy of the problem is obvious due to the 1-class case, and the fact that the eigenvalues $\lambda_i^N$ depend only on $\bar{f}^N$ and not on $\bar{f}^c$, $c\neq N$, and that the $\lambda_i^c$, $c\neq N$, are constant. Therefore, for each $(c,i)$-characteristic field, $c=1,\ldots,N$, $i=0,\ldots,M$, the Hugoniot loci and the integral curves associated to the shock and rarefaction waves, respectively, coincide, and thus the Lax curves are given by contact discontinuities.
\end{remark}

\begin{ex}
    \label{ex_kin:discrete_model_N=2,M=1}
    $N=2$, $M=1$. For the 2-class, 2-velocity model with velocities $0=v_0<v_1=1$, we obtain the model
    \begin{align*}
        \partial_t \bar{f}_0^1 &- \dfrac{1}{1-r} \bar{f}_1^1 \partial_x \bar{f}_0^2 = 0,\\
        \partial_t \bar{f}_1^1 &+ \partial_x \bar{f}_1^1 + \dfrac{1}{1-r} \bar{f}_1^1 \partial_x \bar{f}_0^2 = 0,\\
        \partial_t \bar{f}_0^2 &- \dfrac{1}{1-r} \bar{f}_1^2 \partial_x \bar{f}_0^2 = 0,\\
        \partial_t \bar{f}_1^2 &+ \partial_x \bar{f}_1^2 + \dfrac{1}{1-r} \bar{f}_1^2 \partial_x \bar{f}_0^2 = 0.
    \end{align*}
    The system matrix is
    \begin{equation*}
        A =
        \begin{pmatrix}
            V & A^1\\
            0 & A^2+V
        \end{pmatrix}, \quad \text{with} \quad
        V =
        \begin{pmatrix}
            0 & 0\\
            0 & 1
        \end{pmatrix} \quad \text{and} \quad
        A^c =
        \begin{pmatrix}
            -\frac{\bar{f}_1^c}{1-r} & 0\\
            \frac{\bar{f}_1^c}{1-r} & 0
        \end{pmatrix}
        =
        \begin{pmatrix}
            -\frac{\bar{q}^c}{1-r} & 0\\
            \frac{\bar{q}^c}{1-r} & 0
        \end{pmatrix}.
    \end{equation*}
    The eigenvalues of $A^2+V$ are
    \begin{equation*}
        \lambda_0^2 = -\dfrac{\bar{q}^2}{1-r}, \qquad \lambda_1^2=1
    \end{equation*}
    with associated eigenvectors
    \begin{equation*}
        x_0^2 = 
        \begin{pmatrix}
            1\\
            -\frac{\bar{q}^2}{1-\bar{f}_0^2}
        \end{pmatrix},
        \qquad x_1^2 = 
        \begin{pmatrix}
            0\\
            1
        \end{pmatrix}.
    \end{equation*}
    Then,
    \begin{equation*}
        A^1 x_0^2 = \frac{\bar{q}^1}{1-r} 
        \begin{pmatrix}
            -1\\
            1
        \end{pmatrix},
        \qquad
        A^1 x_1^2 = 
        \begin{pmatrix}
            0\\
            0
        \end{pmatrix}.
    \end{equation*}
    For $x_0^1$,
    \begin{equation*}
        (\lambda_0^2 I_2 - V) x_0^1 = 
        \begin{pmatrix}
            -\frac{\bar{q}^2}{1-r} & 0\\
            0 & -1-\frac{\bar{q}^2}{1-r}
        \end{pmatrix} x_0^1 = A^1 x_0^2,
    \end{equation*}
    obtaining
    \begin{equation*}
        x_0^1 = \dfrac{\bar{q}^1}{\bar{q}^2}
        \begin{pmatrix}
            1\\
            -\frac{\bar{q}^2}{1-\bar{f}_0^2}
        \end{pmatrix} = \dfrac{\bar{q}^1}{\bar{q}^2} x_0^2,
    \end{equation*}
    and the eigenvector $\mathbf{X}_0 = \mathbf{R}_0^2 \in\mathbb{R}^4$ would be
    \begin{equation*}
        \mathbf{X}_0 =
        \begin{pmatrix}
            \frac{\bar{q}^1}{\bar{q}^2} x_0^2\\
            x_0^2
        \end{pmatrix}.
    \end{equation*}
    For the special case when $\bar{q}^2=\bar{f}_1^2=0$, we also obtain $\bar{q}^1=0$ due to $0\leq \bar{f}_1^1\leq \bar{f}_1^2$, and solvability with value 0 and full eigenvector $\mathbf{X}_0=(0,0,1,0)^T$. For $x_1^1$,
    \begin{equation*}
        (\lambda_1^2 I_2-V) x_1^1 = 
        \begin{pmatrix}
            -1 & 0\\
            0 & 0
        \end{pmatrix} x_1^1 = A^1 x_1^2,
    \end{equation*}
    giving, for example, $x_1^1 = (0,0)^T$, and the full eigenvector as $\mathbf{X}_1=\mathbf{R}_1^2=(0,0,0,1)^T$. Together with eigenvectors $\mathbf{R}_0^1=(1,0,0,0)^T$ and $\mathbf{R}_1^1=(0,1,0,0)^T$ associated to $\lambda_0^1=0$ and $\lambda_1^1=1$, respectively, we have hyperbolicity of the 2-class model.
\end{ex}

\begin{remark}
	\label{inv2}
    We note that, for the multi-class case, the domain $\Delta_M^N$ is not invariant. Since our system is linearly degenerate, we can apply Hoff's invariant region theory \cite{Hoff} (specifically Theorem 3.2(a)). To show that $\Delta_M^N$ is not invariant, it suffices to show that the condition fails for the 2-class, 2-velocity case in Example \ref{ex_kin:discrete_model_N=2,M=1}. The associated left eigenvectors, in this case, are
    \begin{equation*}
        \begin{array}{ll}
            l_0^1 = \left( 1,0,-\frac{\bar{f}_1^1}{\bar{f}_1^2},0 \right),\qquad & l_1^1 = \left( 0,1,\frac{\bar{f}_1^2}{1-\bar{f}_0^2},0 \right),\\[8pt]
            l_0^2 = (0,0,1,0),\qquad & l_1^2 = \left( 0,0,\frac{\bar{f}_1^2}{1-\bar{f}_0^2},1 \right).
        \end{array}
    \end{equation*}
    By evaluating these eigenvectors on the boundaries, we find that the moving components ($v_1=1$) and the maximum density limits satisfy Hoff's condition:
    \begin{itemize}
        \item[-] On $\bar{f}_1^1=0$, with normal $n=(0,1,0,0)$, we have $l_1^1=(0,1,0,0)$.
        \item[-] On $\bar{f}_1^1=\bar{f}_1^2$, the difference $l_1^2-l_1^1$ yields a parallel to the normal $n=(0,-1,0,1)$.
        \item[-] On $r=1$, i.e., $\bar{f}_0^2+\bar{f}_1^2=1$, the left eigenvector is $l_1^2=(0,0,1,1)$, which is parallel to the norm of the boundary. 
    \end{itemize}
    Therefore, these boundaries are invariant. The failure of invariance is restricted to the stationary components ($v_0=0$) on the boundaries, $\bar{f}_0^1=0$ and $\bar{f}_0^1=\bar{f}_0^2$, with normals $(1,0,0,0)$ and $(-1,0,1,0)$, respectively. On these boundaries, the left eigenvector $l_0^1$ is only parallel to the first normal, if $\bar{f}_1^1=0$, and only parallel to the second on the plane $\bar{f}_1^1=\bar{f}_1^2$. This does not hold for general traffic states, and therefore the condition is not fulfilled.

    Because the system lacks strict domain invariance for the stationary components, we will enforce these bounds on the numerical results in Section \ref{sec_kin:numericalresults} in order not to obtain unphysical states.
\end{remark}

\subsection{Conservative variables}
\label{sec_kin:discretizationmodel:conservative_variables}

Consider the general equation \eqref{eq_kin:discretized_model_1_fbar} without the relaxation term, that is,
\begin{equation*}
    \partial_t \bar{f}^c_i + v_i \partial_x \bar{f}_i^c - \dfrac{1}{1-r} I_i(\bar{f}^c, \partial_x \bar{f}^N) = 0, \quad i=0,\ldots,M, \quad c = 1,\ldots,N.
\end{equation*}
We do the summation over the $M+1$ equations associated to each class $c$, obtaining
\begin{equation*}
    \partial_t \bar{\rho}^c + \partial_x \bar{q}^c - \dfrac{1}{1-r} \sum_{i=0}^M I_i(\bar{f}^c, \partial_x \bar{f}^N) = 0.
\end{equation*}
The goal is now to see that the last term of the equation is equal to 0, which is straightforward, as
\begin{align*}
    \sum_{i=0}^M I_i(\bar{f}^c, \partial_x \bar{f}^N) &= \sum_{i=0}^{M-1} \sum_{\ell=i+1}^M (v_{\ell}-v_i) \bar{f}_{\ell}^c \partial_x \bar{f}_i^N - \sum_{i=1}^M \sum_{\ell=0}^{M-1} (v_i-v_{\ell}) \bar{f}_i^c \partial_x \bar{f}_{\ell}^N \\
    &= \sum_{i=0}^{M-1} \sum_{\ell=i+1}^M (v_{\ell}-v_i) \bar{f}_{\ell}^c \partial_x \bar{f}_i^N - \sum_{\ell=1}^M \sum_{i=0}^{M-1} (v_{\ell}-v_i) \bar{f}_{\ell}^c \partial_x \bar{f}_i^N\\
    &= 0.
\end{align*}
Therefore, we obtain the classical macroscopic conservation law $\partial_t \bar{\rho}^c + \partial_x \bar{q}^c=0$, $c=1,\ldots,N$. We now rewrite our system, for $c=1,\ldots,N$, as
\begin{align*}
    \partial_t \bar{\rho}^c &+ \partial_x \bar{q}^c=0,\\
    \partial_t \bar{f}^c_i &+ v_i \partial_x \bar{f}_i^c - \dfrac{1}{1-r} I_i(\bar{f}^c, \partial_x \bar{f}^N) = 0, \quad i=1,\ldots,M.
\end{align*}
We are now looking for transformations of the variables that will give a conservative form of the system. Define the elements $N_i^c$, $i=0,\ldots,M$, $c=1,\ldots,N$, as the conservative variables for the above system that depend on the distribution functions $\bar{f}_i^c$. We recall that for class $c=N$, we can apply the results for the single-class formulation of the model. Therefore, for a system with $M+1$ discrete velocities, the conservative variables associated to class $c=N$ are analogous to the ones presented in \cite{BK18}, that is,
\begin{equation*}
    N_0^N = 1-r, \qquad N_i^N = \dfrac{1-r}{1-\sum_{\ell=0}^{i-1}\bar{f}_{\ell}^N}, \qquad i=1,\ldots,M,
\end{equation*}
and the distributions can be reconstructed as
\begin{equation*}
    \bar{f}_M^N = N_0^N \left( \dfrac{1}{N_M^N}-1 \right), \qquad \bar{f}_i^N = N_0^N \left( \dfrac{1}{N_i^N}-\dfrac{1}{N_{i+1}^N} \right), \qquad i=0,\ldots,M-1.
\end{equation*}
However, this formulation cannot be directly extended to the lower classes $c<N$. For these classes, the equations for the microscopic distributions $\bar{f}_i^c$ involve as well several derivative terms of the form $\partial_x \bar{f}_{\ell}^N$. As a consequence, the above transformation does not provide a conservative form for these lower classes.

Each lower class $c<N$ admits two conservative variables, corresponding to the slowest and fastest velocity populations, $v_0=0$ and $v_M=1$,
\begin{equation*}
    N_0^c = \bar{\rho}^c, \qquad N_M^c = \dfrac{1-r}{\bar{f}_M^c}.
\end{equation*}
For $N_0^c$, it is straightforward to see from the macroscopic conservation law that $\partial_t N_0^c + \partial_x \bar{q}^c=0$. To obtain the associated equation written in conservative form for $N_M^c$ one proceeds  as follows:
\begin{align*}
    \partial_t N_M^c &= \partial_t \left(\dfrac{1-r}{\bar{f}_M^c}\right)\\
    &= - \dfrac{1}{\bar{f}_M^c} \partial_t r - \dfrac{1-r}{(\bar{f}_M^c)^2} \partial_t \bar{f}_M^c\\
    &= \dfrac{1}{\bar{f}_M^c} \partial_x \bar{q}^N + \dfrac{1-r}{(\bar{f}_M^c)^2} \left( \partial_x \bar{f}_M^c + \dfrac{\bar{f}_M^c}{1-r} \sum_{\ell=0}^M (1-v_{\ell}) \partial_x \bar{f}_{\ell}^N \right)\\
    &= \dfrac{1}{\bar{f}_M^c} \partial_x \bar{q}^N + \dfrac{1-r}{(\bar{f}_M^c)^2} \left( \partial_x \bar{f}_M^c + \dfrac{\bar{f}_M^c}{1-r} (\partial_x r - \partial_x \bar{q}^N) \right)\\
    &= \dfrac{1}{\bar{f}_M^c} \partial_x r + \dfrac{1-r}{(\bar{f}_M^c)^2} \partial_x \bar{f}_M^c\\
    &= - \partial_x N_M^c.
\end{align*}
Therefore, the equation is $\partial_t N_M^c + \partial_x N_M^c = 0$. In contrast, the equations corresponding to intermediate velocities $i=1,\ldots,M-1$ cannot, in general, be rewritten in conservative form.

\begin{ex}
    \label{ex_kin:conservative_vbls_M=1}
    When the model involves only two velocities ($M=1$), for $c=1,\ldots,N-1$, the conservative equations can be written as
    \begin{equation}
        \partial_t N_0^c + \partial_x \left( \dfrac{N_0^N}{N_1^c} \right) = 0,\qquad \partial_t N_1^c + \partial_x N_1^c = 0, \label{eq_kin:cons_step_1}
    \end{equation}
    with primitive variables
    \begin{equation*}
        \bar{f}_0^c = N_0^c - \dfrac{N_0^N}{N_1^c}, \qquad \bar{f}_1^c = \bar{q}^c = \dfrac{N_0^N}{N_1^c},
    \end{equation*}
    while for $c=N$ they are
    \begin{equation}
        \partial_t N_0^N + \partial_x \left( N_0^N - \dfrac{N_0^N}{N_1^N} \right) = 0,\qquad \partial_t N_1^N + \partial_x N_1^N = 0, \label{eq_kin:cons_step_2}
    \end{equation}
    with
    \begin{equation*}
        \bar{f}_0^N = 1 - \dfrac{N_0^N}{N_1^N}, \qquad \bar{f}_1^N = \bar{q}^N = N_0^N \left( \dfrac{1}{N_1^N} - 1 \right).
    \end{equation*}
\end{ex}

\section{Path-conservative methods}
\label{sec_kin:path_cons_methods}

Following the analysis in Section \ref{sec_kin:discretizationmodel:conservative_variables}, it is generally not straightforward to rewrite the general model in conservative form, unlike the single-class case \cite{BK24discrete}. This is a direct consequence of the interaction terms $I_i(\bar{f}^c,\partial_x \bar{f}^N)$ in \eqref{eq_kin:interaction_term}, which involve products between the state variables of one vehicle class and the spatial derivatives of another. Because these terms cannot be consolidated into a global flux gradient $\partial_x F(U)$, the system is treated in non-conservative quasilinear form without the relaxation term,
\begin{equation}
    \partial_t U + A(U) \partial_x U = 0, \label{eq:noncons_syst}
\end{equation}
where $U=U(x,t)\in \Omega \subset \mathbb{R}^{N(M+1)}$ is the state variable and $A(U):\Omega \rightarrow \mathbb{R}^{N(M+1) \times N(M+1)}$ denotes the matrix of the system. Since the system is written in non-conservative form, the product $A(U)\partial_x U$ is not well-defined in the sense of distributions for discontinuous solutions. Therefore, we use the path-conservative framework of \cite{Pares06} to provide a consistent definition of weak solutions, where the jump conditions across shocks are determined by the choice of a family of paths $\Phi$. We choose such family of paths as in \cite{DalMaso1995DefinitionAW,LeFloch99NonconservProducts}.
\begin{definition}[G. Dal Maso, P.G. LeFloch, and F. Murat \cite{DalMaso1995DefinitionAW}]
    The family of paths $\Phi:[0,1]\times \Omega \times \Omega \rightarrow \Omega$ is a locally Lipschitz continuous map such that it fulfills the following conditions:
    \begin{itemize}
        \item[(a)] $\Phi (0;U^{-},U^{+}) = U^{-}$ and $\Phi (1;U^{-},U^{+}) = U^{+}$ for every $U^{-},U^{+}\in\Omega$, 

        \item[(b)] for every bounded set $\Omega'\subset\Omega$, there exist constants $k_1,k_2$ such that, for almost every $s\in[0,1]$,
        \begin{equation*}
            |\partial_s\Phi (s;U^{-},U^{+})| \leq k_1 |U^{+}-U^{-}|
        \end{equation*} 
        and 
        \begin{equation*}
            |\partial_s \Phi (s;U^{-}_1,U^{+}_1) - \partial_s \Phi (s;U^{-}_2,U^{+}_2)| \leq k_2 (|U^{-}_1-U^{-}_2| + |U^{+}_1-U^{+}_2|)
        \end{equation*}
        for all $U^{-},U^{-}_1,U^{-}_2,U^{+},U^{+}_1,U^{+}_2 \in \Omega'$.
    \end{itemize}
\end{definition}
The product $A(U)\partial_x U$ can be denoted by $[A(U) \partial_x U]_{\Phi}$, a Borel measure, after choosing $\Phi$, and is defined as
\begin{equation}
    \begin{aligned}
        \langle [A(U(\cdot,t)) \partial_x U(\cdot,t)]_{\Phi}, \phi \rangle &= \int_{\mathbb{R}} A(U(x,t)) \partial_x U(x,t) \phi(x) dx\\
        &+ \sum_{x_D\in D(t)} \left( \int_0^1 A(\Phi(s;U^{-}_D,U^{+}_D)) \partial_s \Phi(s;U^{-}_D,U^{+}_D) ds \right) \phi(x_D) 
    \end{aligned} \label{eq:noncons_product}
\end{equation}
for every test function $\phi\in\mathcal{C}_0^{\infty}(\mathbb{R}^{N(M+1)})$. The states $U_D^{-}=U(x_D^{-},t)$ and $U_D^{+}=U(x_D^{+},t)$ denote the solutions to the left and to the right of the discontinuity at position $x_D$ and time $t$, and $D(t) \in \mathbb{R}$ the set of all positions of the discontinuities at time $t$. 

Considering the interval $I_j=[x_{j-1/2},x_{j+1/2}]$ for every $j$, we obtain the update formula
\begin{equation}
    U_j^{n+1} = U_j^n - \lambda \langle [A(U^n) \partial_x U^n]_{\Phi}, \mathbbm{1}_{I_j} \rangle, \quad \lambda=\Delta t/\Delta x. \label{eq:update_eq_noncons}
\end{equation}
For the numerical approximation of \eqref{eq:noncons_product}, we will rely on the discrete term of the nonconservative product, due to the fact that the numerical solution is piecewise constant. Discretizing the product into two terms depending on the contributions of the cells $I_j$ and $I_{j+1}$, i.e.,
\begin{equation}
    \langle [A(U^n) \partial_x U^n]_{\Phi}, \mathbbm{1}_{I_j} \rangle = D_{j-1/2}^{n,+}+D_{j+1/2}^{n,-}, \label{eq:approx_noncons_product}
\end{equation}
gives the desired finite volume scheme, where 
\begin{equation*}
    D^{\pm}:\Omega^{k+l+1}\rightarrow \Omega, \quad D^{n,\pm}_{j+1/2} = D^{\pm}(U_{j-k}^n,\ldots,U_{j+l}^n).
\end{equation*}
In particular, we consider $D^{n,\pm}_{j+1/2} = D^{\pm}(U_j^n,U_{j+1}^n)$ (see \cite{KolbeHerty24}).
\begin{definition}[C. Par\'{e}s \cite{Pares06}]
    The numerical scheme \eqref{eq:update_eq_noncons}-\eqref{eq:approx_noncons_product} is $\Phi$-\textit{conservative} if
    \begin{enumerate}
        \item[(1)] $D^{\pm}(U,U)=0$ for every $U\in\Omega$ and 
        \item[(2)] For each $U^{-},U^{+}\in\Omega$,
        \begin{equation}
            D^{-}(U^{-},U^{+}) + D^{+}(U^{-},U^{+}) = \int_0^1 A(\Phi(s;U^{-},U^{+})) \partial_s \Phi(s;U^{-},U^{+}) ds. \label{eq:def_D_pm}
        \end{equation}
    \end{enumerate}
\end{definition}
As presented in \cite{Pares09}, there are several choices for path-conservative schemes (e.g., Roe method \cite{Roe1981,Pares04Roe}, Godunov method \cite{Godunov,MRP07Godunov}). Hereafter, the numerical flux is set as a generalized Lax-Friedrichs scheme \cite{Lax1954LFscheme,Pares09}, with
\begin{equation*}
    D^{\pm}(U^{-},U^{+}) = \dfrac{1}{2} \int_0^1 A(\Phi(s;U^{-},U^{+})) \partial_s \Phi(s;U^{-},U^{+}) ds \pm \dfrac{1}{2\lambda} (U^{+} - U^{-}).
\end{equation*}
Based on the theoretical framework of \cite{Toumi92}, we have the following definition:
\begin{definition}[I. Toumi \cite{Toumi92}]
    A matrix $\hat{A}(U^{-},U^{+})$ is a \textit{generalized Roe matrix} if it satisfies the following conditions:
    \begin{itemize}
        \item[(1)] \textbf{Consistency:} $\hat{A}(U,U)=A(U)$ for every $U\in\Omega$,
        \item[(2)] \textbf{Hyperbolicity:} $\hat{A}(U^{-},U^{+})$ is diagonalizable with real eigenvalues for any states $U^{-},U^{+}\in\Omega$, and
        \item[(3)] \textbf{Path-consistency:} for any $U^{-},U^{+}\in\Omega$,
        \begin{equation*}
            \hat{A}(U^{-},U^{+})(U^{+}-U^{-}) = \int_0^1 A(\Phi(s;U^{-},U^{+})) \partial_s \Phi(s;U^{-},U^{+}) ds
        \end{equation*} 
    \end{itemize}
\end{definition}
Let 
\begin{align*}
    I_{j+1/2}^n = \hat{A}_{j+1/2}^n(U_{j+1}^n-U_j^n) &= D_{j+1/2}^{n,-} + D_{j+1/2}^{n,+}\\
    &= \int_0^1 A(\Phi(s;U_j^n,U_{j+1}^n)) \partial_s \Phi(s;U_j^n,U_{j+1}^n) ds,
\end{align*}
where $\hat{A}_{j+1/2}^n=\hat{A}(U_j^n,U_{j+1}^n)$ is taken as a generalized Roe matrix. Thus, we obtain the update equation
\begin{equation}
    U_j^{n+1} = \dfrac{1}{2} (U_{j-1}^n + U_{j+1}^n) - \dfrac{\lambda}{2} (I_{j-1/2}^n + I_{j+1/2}^n). \label{eq:gen_path_cons_LF}
\end{equation}
In the following, the nonconservative product $A(U)\partial_x U$ is defined by means of the straight segment path 
\begin{equation}
    \Phi(s;U^{-},U^{+}) = U^{-} + s(U^{+}-U^{-}), \qquad U^{-},U^{+}\in\Omega, \qquad s\in[0,1]. \label{eq:ex_path_family}
\end{equation}
For the straight segment path, the Roe matrix $\hat{A}_{j+1/2}^n$ corresponds to the integral of the system matrix depending on the path, i.e.,
\begin{equation*}
    \hat{A}_{j+1/2}^n = \int_0^1 A(\Phi(s;U_j^n,U_{j+1}^n)) ds.
\end{equation*}

\begin{remark}
    The choice of the family of paths may influence the convergence of path-consistent numerical schemes toward weak solutions. In particular, for linearly degenerate characteristic fields, if the selected path coincides with a parametrization of the corresponding integral curve connecting left and right states, the convergence error vanishes (see \cite{Castro08}). In the present work, we adopt the straight segment path for simplicity, as seen in \cite{KolbeHerty24,Pares09,Volpert1967}.
\end{remark}

\section{Stability of the continuous kinetic model}
\label{sec_kin:stability_cont_model}

In this section, we derive the macroscopic limit of the multi-class continuous kinetic model \eqref{eq_kin:kinbasic} via a formal asymptotic expansion. Following the classical approach for vehicular traffic established in \cite{Nel99,Sopasakis}, we use a Chapman-Enskog expansion to analyze the structural properties and stability of the resulting macroscopic system.

We use the following notations for the first and second moment of each $f^c$, $c=1,\ldots,N$,
\begin{equation*}
	q^c = \int v f^c(v)dv, \qquad e^c = \int v^2 f^c(v)dv.
\end{equation*}
We obtain the balance laws for the first two moments of $f^c$ by multiplying equation \eqref{eq_kin:kinbasic} with $1$ and $v$ and integrating with respect to $v$, 
\begin{equation}
	\begin{aligned}
	    \partial_t \rho^c &+ \partial_x q^c=0,\\
        \partial_t q^c &+ \partial_x e^c + \frac{1}{1-r} \sum_{d=1}^N C(f^c,\partial_x f^d) = \frac{1}{\epsilon T^c} (F^c(\rho) - q^c),
	\end{aligned}
    \label{eq_kin:balance1_cont}
\end{equation} 
with
\begin{equation*}
	C(f,g) = \iint_{\hat{v}<v} (v-\hat{v})^2 f(v) g(\hat{v}) d\hat{v}dv.
\end{equation*} 
Consider now the system \eqref{eq_kin:balance1_cont}. For each class $c=1,\ldots,N$, we take the Chapman-Enskog expansions of $f^c$, $q^c$ and $e^c$ as
\begin{equation*}
    f^c = f^{e,c} + \mathcal{O}(\epsilon), \qquad q^c = F^c(\rho) + \mathcal{O}(\epsilon), \qquad e^c = E^c(\rho) + \mathcal{O}(\epsilon).
\end{equation*}
Substituting in $C(f^c,\partial_x f^d)$ and applying the chain rule, we obtain
\begin{align*}
    C(f^c,\partial_x f^d) &= \iint_{\hat{v}<v} (v-\hat{v})^2 f^c(v) \partial_x f^d(\hat{v}) d\hat{v} dv\\
    &= \iint_{\hat{v}<v} (v-\hat{v})^2 f^{e,c}(\rho,v) \partial_x f^{e,d}(\rho,\hat{v}) d\hat{v} dv + \mathcal{O}(\epsilon)\\
    &= \sum_{i=1}^N \left( \iint_{\hat{v}<v} (v-\hat{v})^2 f^{e,c}(\rho,v) \partial_{\rho^i} f^{e,d}(\rho,\hat{v}) d\hat{v} dv \right) \partial_x \rho^i + \mathcal{O}(\epsilon)\\
    &= \sum_{i=1}^N C(f^{e,c}, \partial_{\rho^i} f^{e,d}) \partial_x \rho^i + \mathcal{O}(\epsilon).
\end{align*}
Applying the chain rule as well for $q^c$ and $e^c$ gives
\begin{equation*}
    \partial_t q^c = \sum_{i=1}^N \partial_{\rho^i} F^c(\rho) \partial_t \rho^i, \qquad \partial_x q^c = \sum_{i=1}^N \partial_{\rho^i} F^c(\rho) \partial_x \rho^i, \qquad \partial_x e^c = \sum_{i=1}^N \partial_{\rho^i} E^c(\rho) \partial_x \rho^i.
\end{equation*}
Substituting the continuity equation in $\partial_t q^c$,
\begin{equation*}
    \partial_t q^c = - \sum_{i=1}^N \sum_{j=1}^N \partial_{\rho^i} F^c(\rho) \partial_{\rho^j} F^i(\rho) \partial_x \rho^j.
\end{equation*}
Therefore, 
\begin{align}
    q^c = F^c(\rho) - \epsilon T^c\sum_{i=1}^N D^{c,i}(\rho) \partial_x \rho^i, \label{eq_kin:Chapman-Enskog_flux}
\end{align}
with 
\begin{equation}
    D^{c,i}(\rho) = \partial_{\rho^i} E^c(\rho) - \sum_{d=1}^N \left( \partial_{\rho^d} F^c(\rho) \partial_{\rho^i} F^d(\rho) - \dfrac{1}{1-r} C(f^{e,c}, \partial_{\rho^i} f^{e,d}) \right). \label{eq_kin:drift-diffusion_param}
\end{equation}
Plugging this into the continuity equation gives a drift-diffusion equation for each $c=1,\ldots,N$,
\begin{equation*}
    \partial_t \rho^c + \sum_{i=1}^N \partial_{\rho^i} F^c(\rho) \partial_x \rho^i = \epsilon T^c \sum_{i=1}^N \partial_x(D^{c,i}(\rho) \partial_x \rho^i).
\end{equation*}
We have obtained a diffusively-corrected multi-class LWR model. It is written in system form as 
\begin{equation}
    \partial_t \rho + \partial_x F(\rho) = \epsilon T \partial_x (D(\rho) \partial_x \rho), \label{eq_kin:drift-diffusion_eq}
\end{equation}
with $T=\operatorname{diag}(T^1,\ldots,T^N)$, $D(\rho)=(D^{c,i}(\rho))$ denoting the diffusion matrix of size $N\times N$ and $F(\rho) = (F^1(\rho),\ldots,F^N(\rho))$ the vector containing all the class-specific fundamental diagrams.

We now investigate  the stability of the multi-species system of equations \eqref{eq_kin:drift-diffusion_eq}. A similar analysis is present in \cite{DCMCLWR}, where a diffusively-corrected multi-class LWR model is discussed. For one class of vehicles such a discussion is described in \cite{Nel00,Nel02}. Choosing a small perturbation $\tilde{\rho}$ around an initial constant state $\rho_0$ such that $\rho(x,t)=\rho_0 + \tilde{\rho}(x,t)$, and substituting into \eqref{eq_kin:drift-diffusion_eq}, yields
\begin{equation}
    \partial_t \tilde{\rho} + \partial_x F(\rho_0 + \tilde{\rho}) = \epsilon T \partial_x (D(\rho_0 + \tilde{\rho}) \partial_x \tilde{\rho}). \label{eq_kin:drift-diffusion_eq:perturbation}
\end{equation}
For the fundamental diagram $F$, we apply a Taylor expansion up to order $\mathcal{O}(\tilde{\rho}^2)$, i.e., 
\begin{equation*}
    F(\rho_0 + \tilde{\rho}) = F(\rho_0) + J(\rho_0) \tilde{\rho} + \mathcal{O}(\tilde{\rho}^2),
\end{equation*}
where $J$ denotes the Jacobian of $F$. Similarly, taking a Taylor expansion of $D$ around $\rho_0$, that is, $D(\rho_0 + \tilde{\rho}) = D(\rho_0) + \mathcal{O}(\tilde{\rho})$, applying it to \eqref{eq_kin:drift-diffusion_eq:perturbation} and neglecting terms of higher order, gives
\begin{equation}
    \partial_t \tilde{\rho} + J(\rho_0) \partial_x \tilde{\rho} = \epsilon T D(\rho_0) \partial_{xx} \tilde{\rho}. \label{eq_kin:drift-diffusion_eq:linearized}
\end{equation}
To investigate  the stability of the solution, we let the perturbation $\tilde{\rho}$ behave like a spatial wave with frequency $\xi\in\mathbb{R}^{+}$ and amplitude $z(t)$ that changes over time, $\tilde{\rho}(x,t) = z(t) e^{i\xi x}$. Substituting  this in \eqref{eq_kin:drift-diffusion_eq:linearized} transforms  the PDE system into 
\begin{equation*}
    z'(t) = -\xi^2 M(\rho_0,\xi) z(t), \qquad M(\rho_0,\xi) = \epsilon T D(\rho_0) + \dfrac{i}{\xi} J(\rho_0) \in\mathbb{C}^{N\times N} \label{eq_kin:drift-diffusion_eq:ODE}
\end{equation*}
with explicit solution $z(t) = e^{-\xi^2 M(\rho_0,\xi) t} z(0)$. For the linearized stability analysis of the multi-species system of equations \eqref{eq_kin:drift-diffusion_eq}, let $M=M(\rho_0,\xi)$ have eigenvalues $\lambda_i^M$ with multiplicity $m_i$, $i=1,\ldots,r$, $m_1+\cdots+m_r=N$. Considering the Jordan canonical form of $M$, the matrix exponential decomposes into independent blocks corresponding to each distinct eigenvalue $\lambda_j^M$ and its associated Jordan matrix $J_j^M$ of size $m_j$. For these Jordan blocks, the structure of $e^{-\xi^2 J_j^M t}$ introduces time-dependent polynomial terms $t^k e^{-\xi^2 \lambda_j^M t}$, $k\leq m_j-1$. Decomposing each eigenvalue into its real and imaginary components, the perturbation modes evolve according to $t^k e^{-\xi^2 \text{Re}(\lambda_j^M)t}$.

For trivial Jordan blocks, i.e., eigenvalues with multiplicity $m_j=1$, the highest polynomial order is $k=0$. In this case, requiring $\text{Re}(\lambda_j^M)\geq 0$ is sufficient to ensure boundedness as $t\to \infty$, with $\text{Re}(\lambda_j^M)=0$ yielding neutral stability with bounded oscillations, and $\text{Re}(\lambda_j^M)>0$ yielding exponential decay. Conversely, if $m_j>1$, there is an explicit dependence on $t^k$. If $\text{Re}(\lambda_j^M)=0$, this term is unbounded for $t\to \infty$. Therefore, strict positivity $\text{Re}(\lambda_j^M)>0$ is required to guarantee that there is an exponential decay for non-trivial Jordan blocks.

For the cases where $F=0$, i.e., $\partial_t \rho = \epsilon T \partial_x(D(\rho)\partial_x\rho)$, it suffices to observe if the real part of the eigenvalues of $D$ is positive, i.e., the matrix is normally elliptic  \cite{Amann}. Further results regarding similar multi-species systems with cross-diffusion can be seen in \cite{ChenJungel21,Jungel2016}.
\begin{remark} \label{rem_kin:stability_N2}
    Consider \eqref{eq_kin:drift-diffusion_eq} for two classes of vehicles ($N=2$). The two eigenvalues of $M$, $\lambda_1^M$ and $\lambda_2^M$, must satisfy the characteristic equation
    \begin{equation*}
        \lambda^2 - \text{tr}(M) \lambda + \det(M) = 0,
    \end{equation*}
    where $\lambda_1^M+\lambda_2^M=\text{tr}(M)$ and $\lambda_1^M \lambda_2^M = \det(M)$. As $M\in\mathbb{C}^{N\times N}$, both the trace and the determinant can be written as $\text{tr}(M)=T_R + iT_I$ and $\det(M)=D_R + iD_I$, respectively. The terms $T_R,T_I,D_R$ and $D_I$ are calculated using the properties of traces and determinants of (complex) matrices, and are defined as
    \begin{equation*}
        \begin{array}{ll}
            T_R = \epsilon \text{tr}(TD), \qquad & D_R = \epsilon^2 \det(TD) - \dfrac{1}{\xi^2} \det(J),\\
            \, T_I = \dfrac{1}{\xi} \text{tr}(J), \qquad & \, D_I = \dfrac{\epsilon}{\xi} \left( \text{tr}(TD) \text{tr} (J) - \text{tr}(TDJ) \right).
        \end{array}
    \end{equation*}
    Given that stability requires $\text{Re}(\lambda_i^{M})\geq 0$, $i=1,2$, the boundary of the stability region is defined by $\text{Re}(\lambda)=0$. Let $\lambda = i\mu$, $\mu\in\mathbb{R}$. Then, substituting $\lambda$ in the previous equation, we obtain 
    \begin{equation*}
        -\mu^2 - i(T_R + iT_I)\mu + (D_R + iD_I) = 0 \implies (-\mu^2 + T_I\mu + D_R) + i(D_I - T_R\mu) = 0.
    \end{equation*}
    Setting the real and imaginary parts to 0, and solving the resulting system of equations for $\mu$ gives $T_R^2 D_R + T_R T_I D_I - D_I^2=0$. Therefore, the system is stable if
    \begin{equation*}
        T_R > 0 \quad \text{and} \quad T_R^2 D_R + T_R T_I D_I - D_I^2 >0.
    \end{equation*}
    In terms of the original matrices, the stability conditions for the 2-class model reads
    \begin{equation}
        \begin{aligned}
            S_1 &= \text{tr}(TD) > 0,\\
            S_2 = S_2(\epsilon) &= \epsilon^2 \text{tr}(TD)^2\det(TD)\\ 
            &\quad\;- \dfrac{1}{\xi^2} \left( \text{tr}(TD)^2 \det(J) - \text{tr}(TD) \text{tr}(J) \text{tr}(TDJ) + \text{tr}(TDJ)^2 \right)>0.
        \end{aligned}
        \label{eq_kin:stability_criterion}
    \end{equation}
    If $T_R = 0$, then $\text{Re}(\lambda_1^M) = -\text{Re}(\lambda_2^M)$, and the stability criterion above does not apply. In this regime, the system is neutrally stable only if $\text{Re}(\lambda_1^M) = \text{Re}(\lambda_2^M) = 0$; otherwise, one eigenvalue necessarily has a negative real part, and thus the system is unstable. Furthermore, if $M$ has a double eigenvalue $\lambda^M$ with $\text{Re}(\lambda^M) = 0$, the system is unstable as well.
\end{remark}

\begin{ex}
    \label{ex_kin:example_stability_cond_cont}
    Consider the equilibrium functions $f^{e,c}$ described in Example \ref{ex_kin:cont_equil_fct}. Then,
    \begin{align*}
        C(f^{e,c}, \partial_{\rho^i} f^{e,d}) =&\; \iint_{\hat{v}<v} f^{e,c}(\rho,v) \partial_{\rho^i} f^{e,d}(\rho,\hat{v}) d\hat{v} dv\\
        =&\; (\rho^c - \alpha_0^c - \alpha_1^c) \partial_{\rho^i} \alpha_0^d \int_0^1 v^2 dv\\ 
        &+ \dfrac{1}{3} (\rho^c - \alpha_0^c - \alpha_1^c) \partial_{\rho^i} (\rho^d - \alpha_0^d - \alpha_1^d) \int_0^1 v^3 dv\\
        &+ \alpha_1^c \partial_{\rho^i} \alpha_0^d + \alpha_1^c \partial_{\rho^i} (\rho^d - \alpha_0^d - \alpha_1^d) \int_0^1 (1-\hat{v})^2 d\hat{v}\\
        =&\ \dfrac{1}{3} (\rho^c - \alpha_0^c - \alpha_1^c) \partial_{\rho^i} \alpha_0^d\\ 
        &+ \dfrac{1}{12} (\rho^c - \alpha_0^c - \alpha_1^c) \partial_{\rho^i} (\rho^d - \alpha_0^d - \alpha_1^d)\\ 
        &+ \alpha_1^c \partial_{\rho^i} \alpha_0^d + \dfrac{1}{3} \alpha_1^c \partial_{\rho^i} (\rho^d - \alpha_0^d - \alpha_1^d).
    \end{align*}
    Substituting the values of the corresponding $\alpha_j^k$ parameters, $j=0,1$, $k=c,d$, we obtain
    \begin{equation*}
        C(f^{e,c}, \partial_{\rho^i} f^{e,d}) = E^c (\partial_{\rho^i} \rho^d - \partial_{\rho^i} F^d) - F^c (\partial_{\rho^i} F^d - \partial_{\rho^i} E^d),
    \end{equation*}
    with,
    \begin{equation*}
        C(f^{e,c}, \partial_{\rho^i} f^{e,d}) = 
        \begin{cases}
            E^c (1 - \partial_{\rho^d} F^d) - F^c (\partial_{\rho^d} F^d - \partial_{\rho^d} E^d) & \text{ if }\; i = d,\\
            - E^c \partial_{\rho^i} F^d - F^c (\partial_{\rho^i} F^d - \partial_{\rho^i} E^d) & \text{ if }\; i\neq d.
        \end{cases}
    \end{equation*}
    Therefore, the diffusion coefficient $D^{i,c}$ is
    \begin{equation*}
        D^{c,i} = \partial_{\rho^i} E^c - \sum_{d=1}^N \left[ \partial_{\rho^d} F^c \partial_{\rho^i} F^d - \dfrac{E^c}{1-r} (\partial_{\rho^i} \rho^d - \partial_{\rho^i} F^d) + \dfrac{F^c}{1-r} (\partial_{\rho^i} F^d - \partial_{\rho^i} E^d) \right].
    \end{equation*}
    Let $E^c(\rho)=F^c(\rho)$, i.e., the class-specific equilibrium functions are given by $$f^{e,c}(\rho,v) = (\rho^c-F^c(\rho)) \delta_0(v) + F^c(\rho) \delta_1(v).$$ This gives the diffusion coefficient
    \begin{equation}
    	\label{eq_kin:general_coefficients_Dci}
        D^{c,i} = \partial_{\rho^i} F^c + \dfrac{F^c}{1-r} - \sum_{d=1}^N \left( \partial_{\rho^d} F^c + \dfrac{F^c}{1-r} \right) \partial_{\rho^i} F^d.
    \end{equation}
    Let
    \begin{equation*}
        B^{c,i} = \partial_{\rho^i} F^c + \dfrac{F^c}{1-r}, \qquad J^{c,i} = \partial_{\rho^i} F^c.
    \end{equation*}
    Then, we can rewrite \eqref{eq_kin:general_coefficients_Dci} as
    \begin{equation}
    		\label{eq_kin:diff_matrix_BJ}
    		D^{c,i} = B^{c,i} - \sum_{d=1}^N B^{c,d} J^{d,i},
    \end{equation}
    and the diffusion matrix can be written as $D = B(I-J)$, where $B=(B^{c,i})$ and $J=(J^{c,i})$.
\end{ex}

\begin{figure}[!ht]
	\centering
	\includegraphics[scale=0.7]{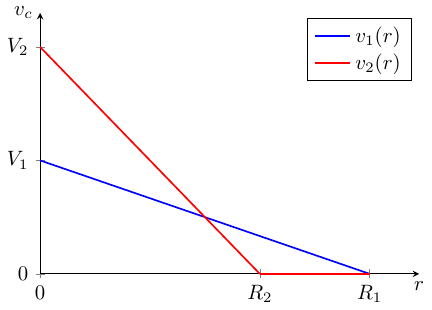}
    	\caption{Velocity function used in Example \ref{ex_kin:stability_cont_model:different_VcRc}.}
	\label{fig:velocityfct:example_diffusion}
\end{figure}

\begin{ex} 
    \label{ex_kin:stability_cont_model:different_VcRc}
    (Example \ref{ex_kin:example_stability_cond_cont} extended) Consider the macroscopic flux
    \begin{equation}
        F^c(\rho)=\rho^c v_c(r), \qquad v_c(r) = V_c\left( 1 - \dfrac{r}{R_c} \right), \label{eq_kin:Fc_Greenshields}
    \end{equation}
    where $v_c(r)$ corresponds to the Greenshields' velocity function \cite{Greenshields1935}. Then, taking $E^c=F^c$,
    \begin{equation*}
        J^{c,i} = v_c(r) + (\partial_{\rho^i} \rho^c) v_c'(r),\qquad B^{c,i} = J^{c,i} + \dfrac{\rho^c v_c(r)}{1-r},
    \end{equation*}
    and the diffusion matrix is
    \begin{equation}
    		\begin{aligned}
    			D^{c,i} =& \; (\partial_{\rho^i} \rho^c) v_c(r) + \rho^c \left( \dfrac{v_c(r)}{1-r} - \dfrac{V_c}{R_c} \right)\\ 
        		&- \sum_{d=1}^N \left[ (\partial_{\rho^d} \rho^c) v_c(r) + \rho^c \left( \dfrac{v_c(r)}{1-r} - \dfrac{V_c}{R_c} \right) \right] \left( (\partial_{\rho^i} \rho^d) v_d(r) - \dfrac{V_d}{R_d} \rho^d \right) \\
        		=& \; (\partial_{\rho^i} \rho^c) v_c(r) (1-v_c(r)) + \rho^c \left[ \dfrac{V_c}{R_c} v_c(r) + C^c(1-v_i(r)+K) \right],
    		\end{aligned}
    		\label{eq_kin:coefficients_Dci}
    \end{equation}
    with
    \begin{equation*}
        C^c = -\dfrac{V_c(1-R_c)}{R_c(1-r)}, \qquad K = \sum_{d=1}^N \dfrac{V_d}{R_d} \rho^d.
    \end{equation*}
    In particular, if $R_k=1$ for some $k\in\lbrace 1,\ldots,N \rbrace$, then $C^k=0$ and
    \begin{equation}
        D^{k,i} = (\partial_{\rho^i} \rho^k) v_k(r) (1-v_k(r)) + V_k\rho^k v_k(r).
        \label{eq_kin:coefficients_Dci_reduced}
    \end{equation}
    Consider now $N=2$ classes (see Figure \ref{fig:velocityfct:example_diffusion}). Let $T^1=T^2=1$ and $\epsilon>0$. Taking $V_1<1=V_2$ and $R_1=1>R_2$, $D=B(I-J)$ would have the reduced coefficients \eqref{eq_kin:coefficients_Dci_reduced} for the first row and the full coefficients \eqref{eq_kin:coefficients_Dci} for the second row, and using the notation in \eqref{eq_kin:diff_matrix_BJ}, $B$ and $J$ are defined as
    \begin{equation*}
    		B(\rho) = 
    		\begin{pmatrix}
    			V_1(1-r) & 0\\
    			\frac{\rho^2}{1-r} \left( 1-\frac{1}{R_2} \right) & 1-\frac{r}{R_2} + \frac{\rho^2}{1-r} \left( 1-\frac{1}{R_2} \right)
    		\end{pmatrix}, \quad
            J(\rho) =
    		\begin{pmatrix}
    			V_1(1-r-\rho^1) & -V_1\rho^1\\
    			-\frac{\rho^2}{R_2} & 1-\frac{r+\rho^2}{R_2}
    		\end{pmatrix}.
    \end{equation*}
\end{ex}

\begin{figure}[!ht]
	\centering
	\includegraphics[scale=0.7]{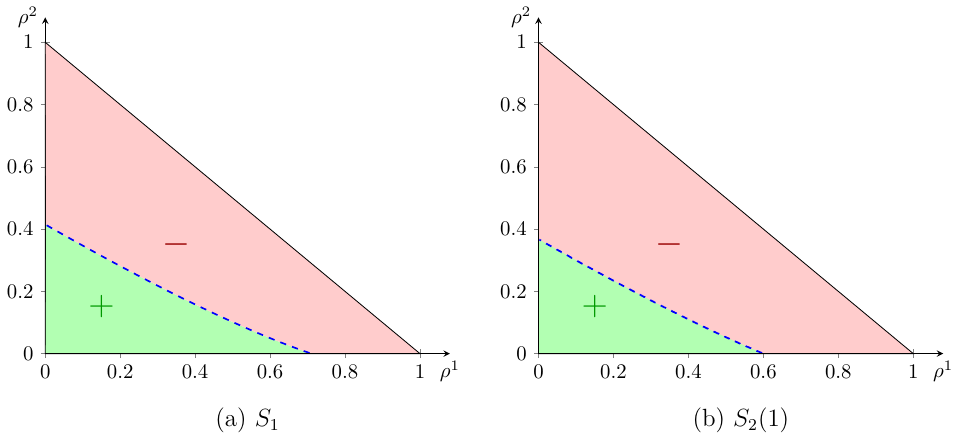}
	\caption{Positive and negative values of $S_1$ and $S_2(1)$ in Example \ref{ex_kin:stability_cont_model:different_VcRc} for $\rho^1,\rho^2$, with $V_1=0.6=R_2$. }
	\label{fig:stability_cond:V1_06_R2_06}
\end{figure}

\begin{figure}[!ht]
	\centering
	\includegraphics[scale=0.63]{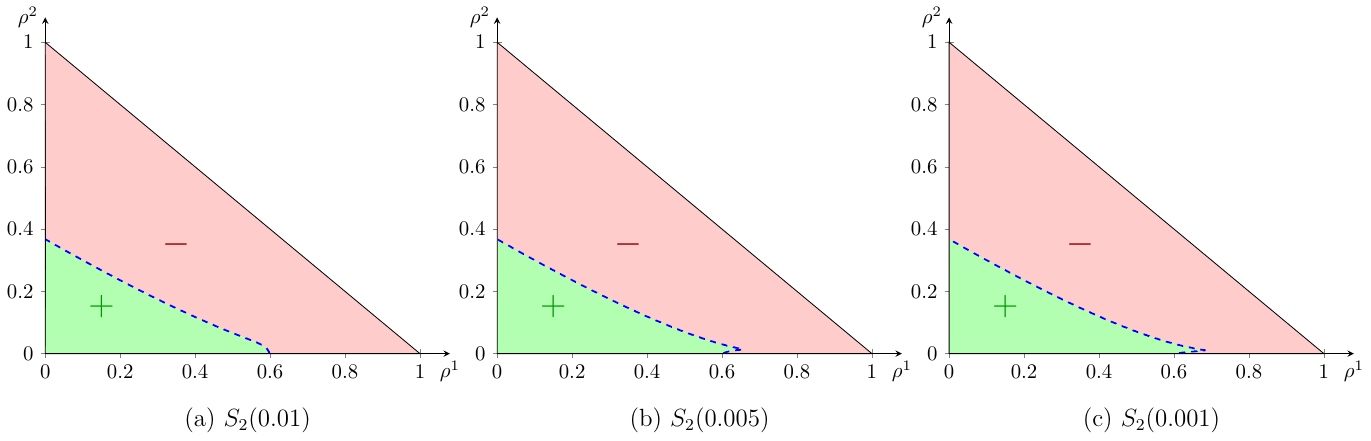}
	\caption{Positive and negative values of $S_2(\epsilon)$, $\epsilon=0.01,0.005,0.001$, in Example \ref{ex_kin:stability_cont_model:different_VcRc} for $\rho^1,\rho^2$, with $V_1=0.6=R_2$.}
	\label{fig:stability_cond:S2_different_eps}
\end{figure}

Figures \ref{fig:stability_cond:V1_06_R2_06} and \ref{fig:stability_cond:S2_different_eps} plot the stability conditions \eqref{eq_kin:stability_criterion} for Example \ref{ex_kin:stability_cont_model:different_VcRc}, where $S_2$ is shown for different values of $\epsilon$. As for this particular choice of fundamental diagram $F^c$ the area where $S_1>0$ fully contains the area where $S_2$ is positive, we will just restrict ourselves to plotting the latter in the numerical tests (see Section \ref{sec_kin:numericalresults:rel_periodic}).

It is important to note that the second stability condition inherently depends on the spatial perturbation frequency $\xi\in\mathbb{R}^{+}$. For a given state to be considered strictly stable, $S_2>0$ must be satisfied for all possible frequencies. Therefore, to generate the stability diagrams, we do not plot the region for a single isolated frequency. Instead, we perform a parameter sweep of $\xi$ over a sufficiently wide logarithmic range (e.g., $[10^{-2}, 10^2]$), where the resulting stable area depicted in the figures represents the global intersection of the stable regimes.

\section{Numerical results}
\label{sec_kin:numericalresults}

In this section, we first describe the numerical scheme used to solve our modified model \eqref{eq_kin:kinbasic_fbar}, and then we present different numerical examples to illustrate it.
			
\subsection{The numerical scheme}
\label{sec_kin:numericalresults:numericalscheme}

We use a splitting scheme solving first the advection part in conservative form and then the relaxation part of the equation. We take an equidistant time discretization with $t^{\nu} = \nu\Delta t$ and a spatial discretization with $x_j=j\Delta x$, where $\nu,j\in \mathbb{N}_0$ denote the time iteration and the cell on the road, respectively. Let $\bar{f}_{i,j}^{c,\nu}$ be the approximation of $\bar{f}_i^c(x_j,t^{\nu})$ for classes $c=1,\ldots,N$ and velocities $i=0,\ldots,M$, with $U_j^{c,\nu} = (\bar{f}_{0,j}^{c,\nu},\ldots,\bar{f}_{M,j}^{c,\nu})^T$, $U_j^{\nu} = (U_j^{1,\nu},\ldots,U_j^{N,\nu})^T$.

Suppose we know $\bar{f}_{i,j}^{c,\nu}$ for each cell on the road. The following algorithm describes the updates from time $t^{\nu}$ to $t^{\nu+1}$. \textit{Step 1} corresponds to the path-conservative method described in Section \ref{sec_kin:path_cons_methods}.
\begin{algorithm}
	\makeatletter
	\renewcommand{\fnum@algocf}{\textbf{\algorithmcfname}}
	\makeatother
	\caption{Update and Relaxation Scheme for $N$ classes and $M+1$ velocities}
	\label{alg:general_N_M}
	\SetAlgoLined
	\DontPrintSemicolon

	\nl \textbf{Step 1}: Choose an $\ell$-point Gauss-Lobatto quadrature, $\ell\geq 3$, to calculate for each $j$ the 2 approximations of the matrix of the system $A$, i.e., $\hat{A}^n_{j-1/2}$ and $\hat{A}^n_{j+1/2}$, and solve for one time-step the update equation \eqref{eq:gen_path_cons_LF}, obtaining $\bar{f}_{i,j}^{c,*}$.\;

	\BlankLine

	\nl \textbf{Step 2}: Define $T^k_{\epsilon}=\epsilon T^k$.\;

	\Indp
    		\nl \textit{2.1}: Update $\bar{f}_{i,j}^{c,*} \longrightarrow f_{i,j}^{c,*}$ using \eqref{eq_kin:fbar_def}.\;

    		\nl \textit{2.2}: For $c=1,\ldots,N$ and $i=0,\ldots,M$, solve the relaxation step with an implicit Euler scheme,
    		\begin{equation*}
        		f_{i,j}^{c,\nu+1} = \frac{T^c_{\epsilon}}{T^c_{\epsilon} + \Delta t} f_{i,j}^{c,*} + \frac{\Delta t}{T^c_{\epsilon} + \Delta t} f_{i,j}^{e,c}(\rho_j^{\nu}).
    		\end{equation*}\;

    		\nl \textit{2.3}: Transform back the variables $f_{i,j}^{c,\nu+1} \longrightarrow \bar{f}_{i,j}^{c,\nu+1}$ using \eqref{eq_kin:fbar_def_initial}.\;
	\Indm
\end{algorithm}
\begin{remark}
	For the 2-velocity case ($M=1$), it is also possible to substitute \textit{Step 1} by the first three steps of the conservative algorithm described in \cite[Section 7.2]{BK24discrete} extended to the multi-class case, following Example \ref{ex_kin:conservative_vbls_M=1}.
\end{remark}

We now illustrate the theoretical results obtained in previous sections with some numerical tests. We will consider two classes of vehicles with associated distribution functions $f^1$ and $f^2$, discretized as $f_0^c,\ldots,f_M^c$, $c=1,2$, with $f_i^c\in[0,1]$, such that
\begin{equation*}
    r = \rho^1 + \rho^2 = \sum_{\ell=0}^M (f^1_{\ell} + f^2_{\ell}),
\end{equation*}
where the initial conditions for the densities and distribution functions are set as
\begin{equation*}
    r_0(x) = r(x,0), \qquad \rho_0^c(x) = \rho^c(x,0), \qquad f_i^c(x,0) = \dfrac{1}{M+1} \rho^c_0(x).
\end{equation*}
For the updates of the distribution functions described in \textit{Step 1} of the previous algorithm, we consider $n_x$ discretization points along a space interval $[a,b]$, and a time step $\Delta t$ given by the corresponding CFL number \cite{CFL1928} on the time interval $[t_0=0,t_f]$, where $n_x$, $[a,b]$ and $t_f$ will change depending on the numerical test. 
Regarding the choice for the velocities, they are taken as equidistant points $v_i=i/M$.

\subsection{Test-case 1: Riemann problem without relaxation}
\label{sec_kin:numericalresults:RP_no_rel}

\begin{figure}[!ht]
	\centering
	\includegraphics[scale=0.62]{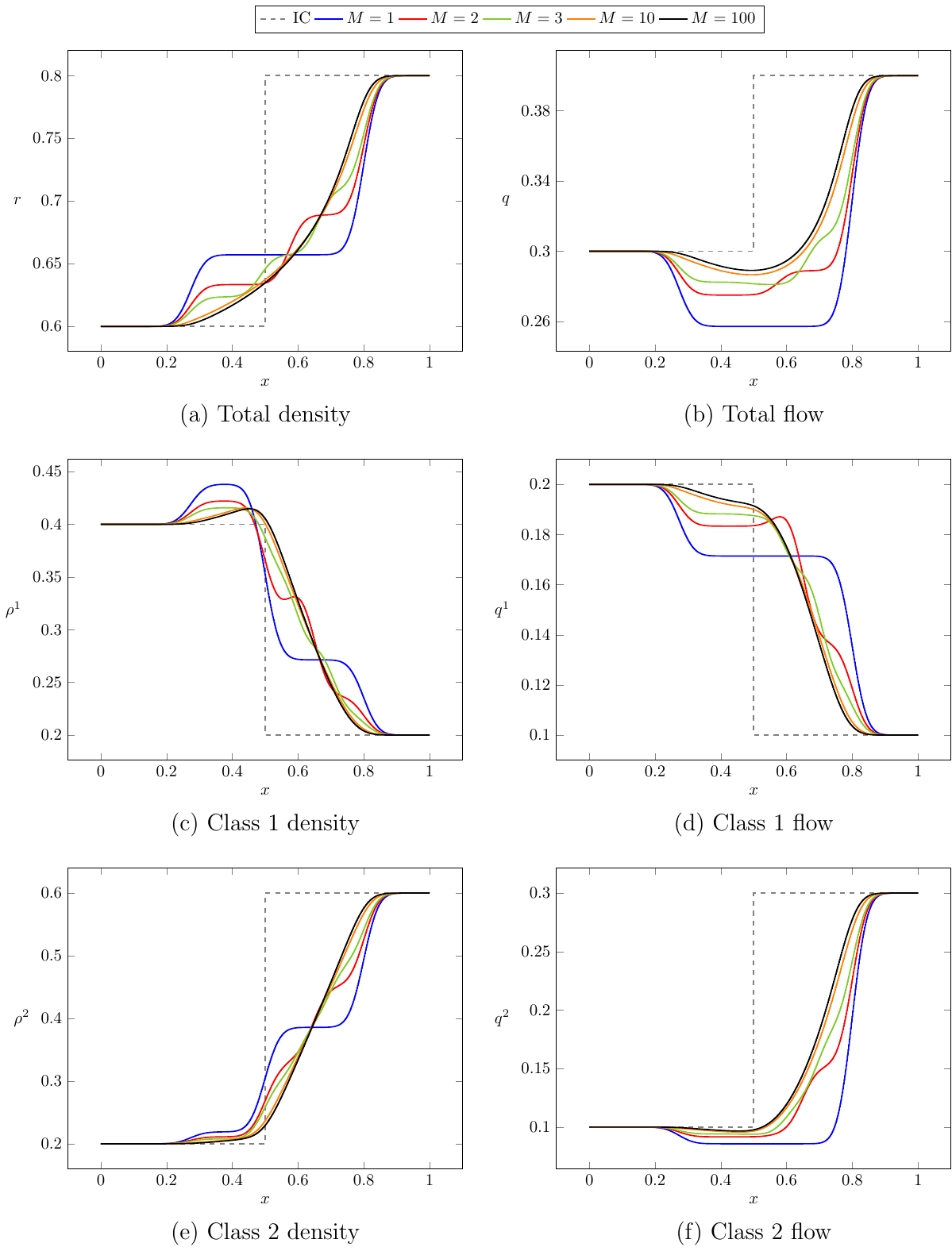}
    \caption{Densities (left column) and flows (right column) for Test-case 1 in Section \ref{sec_kin:numericalresults:RP_no_rel} at time $t_f=0.3$, obtained through \textit{Step 1} of the algorithm for $n_x=1000$ discretization points and different values of $M$. The first row corresponds to the total density $r$ and its corresponding flux $q$. The second and third rows correspond to the classes $c=1$ and $c=2$ of vehicles, respectively. The initial condition is represented with a dashed grey line on each graph.} 
	\label{fig:density_flow:different_M_nx_1000}
\end{figure}

As a first numerical test, we choose a Riemann problem without the relaxation term (i.e., \textit{Step 1} of the algorithm). The initial conditions for the class densities $\rho^1$ and $\rho^2$ are

\begin{equation*}
	\rho^1_0(x) = 
	\begin{cases}
		\rho^1_L = 0.4 & x<0.5,\\
		\rho^1_R = 0.2 & x>0.5,\\
	\end{cases}
	\qquad 
	\rho^2_0(x) = 
	\begin{cases}
		\rho^2_L = 0.2 & x<0.5,\\
		\rho^2_R = 0.6 & x>0.5.\\
	\end{cases}
\end{equation*}
The class-specific fundamental diagram is taken as
\begin{equation*}
		F^c(\rho) = \rho^c(1-r), \qquad c=1,2,
\end{equation*}
so that $q^c=F^c(\rho)$, with a linearly decreasing velocity function $v_c(r)=1-r\in[0,1]$ for $r\in[0,1]$. Because both vehicle classes share identical velocity functions, the multi-class system structurally reduces to the single-class scalar model. Consequently, we expect the evolution of the total density $r(x,t)$ to mirror the dynamics of the standard one-population framework. We consider the time interval $[0,0.3]$ and free boundary conditions.

\begin{figure}[!t]
	\centering
	\includegraphics[scale=0.62]{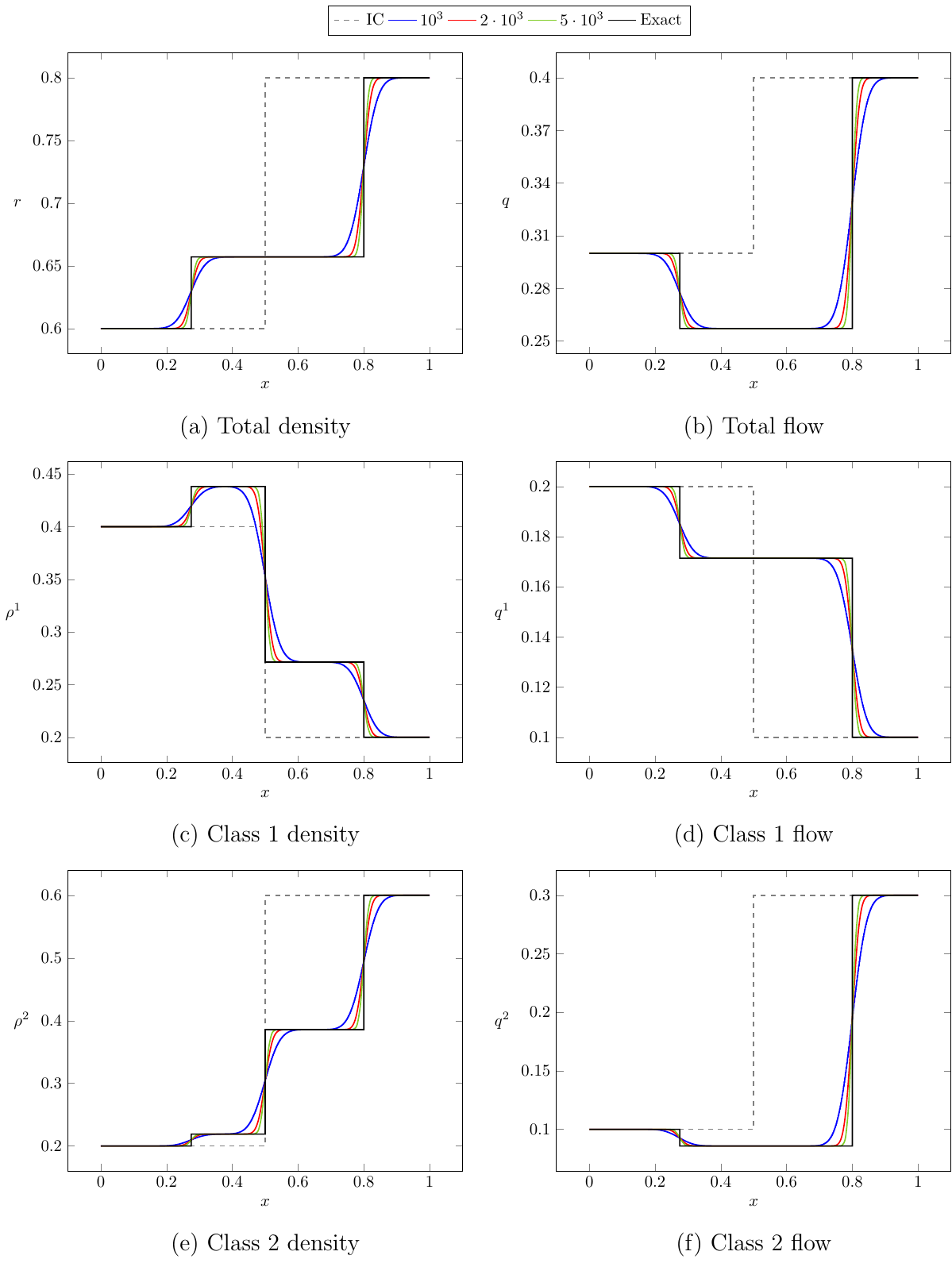}
    \caption{Densities (left column) and flows (right column) for Test-case 1 in Section \ref{sec_kin:numericalresults:RP_no_rel} at time $t_f=0.3$, obtained through \textit{Step 1} of the algorithm for different discretization points and $M=1$. The first row corresponds to the total density $r$ and its corresponding flux $q$. The second and third rows correspond to the classes $c=1$ and $c=2$ of vehicles, respectively. The initial condition is represented with a dashed grey line on each graph. The exact solution for the 2-velocity case is plotted by a black line.} 
	\label{fig:density_flow:M1_different_nx}
\end{figure}

Figure \ref{fig:density_flow:different_M_nx_1000} shows the numerical approximations for the densities and flows at $t_f=0.3$ for different numbers of discrete velocities considering $n_x=1000$. On the other hand, Figure \ref{fig:density_flow:M1_different_nx} shows the comparison between different space discretizations and their convergence to the exact solution for the 2-velocity case.

As seen in Proposition \ref{prop_kin:eigenvalues_discrete_Nc_fbar}, the minimum and maximum characteristic speeds associated with the second modified class $\bar{\rho}^2=r$ for any $M$, i.e., $\lambda_0^2$ and $\lambda_M^2$, correspond exactly to those of the 2-velocity case. Using the initial condition for the $f_i^c$ described above, these extreme eigenvalues simplify to
\begin{equation*}
    \lambda_0^2(r_0) = - \dfrac{r_0}{2(1-r_0)}, \qquad \lambda_M^2=1, \qquad r_0 = r_0(x).
\end{equation*}
Remark \ref{rem_kin:linear_degeneracy} establishes that all characteristic fields are linearly degenerate. Thus, the waves behave as contact discontinuities propagating at constant speeds. In particular, the slowest and fastest wave speeds are determined by $\sigma_{\text{slow}} = \lambda_0^2(r_L) = -0.75$ and $\sigma_{\text{fast}}=1$, respectively, and at time $t_f=0.3$ their corresponding discontinuities are in positions $x_{\text{slow}}=0.275$ and $x_{\text{fast}}=0.8$.

\subsection{Test-case 2: Riemann problem with relaxation}
\label{sec_kin:numericalresults:RP_rel_FanWork}

We now set a different Riemann problem to observe that, when considering the relaxation term described in \textit{Step 2} of the algorithm, we obtain the same behavior as in the $N$-populations model described in \cite{Benzoni-Colombo}. Let the space interval and the time interval be $[0,50]$ and $[0,80]$, respectively. Let the initial conditions for $\rho^1$ and $\rho^2$ be
\begin{equation*}
    \rho_0^1(x) =
    \begin{cases}
        0.5 & x\in[1,10],\\
        0 & \text{else},
    \end{cases}
    \qquad 
    \rho_0^2(x) =
    \begin{cases}
        0.5 & x\in[11,20],\\
        0 & \text{else}.
    \end{cases}
\end{equation*}
In this example, the fundamental diagram is taken as in \eqref{eq_kin:Fc_Greenshields}, with $V_1=1$, $V_2=5/9$ and $R_1=R_2=1$. We take $M=10$ velocities and $n_x=2000$, together with zero inflow and free outflow in the boundaries. For the relaxation, we consider $\epsilon=0.001$ and $T^1=T^2=1$. At the end of the time interval, the vehicles with a higher maximal velocity (e.g., cars) overtake those with a smaller one (e.g., trucks), as seen in Figure \ref{fig:density:overtaking} by $\rho^1$ and $\rho^2$, similarly to the numerical results from  \cite{Benzoni-Colombo,FanWork2015,WongWong2002}. This behavior is consistent with the theoretical framework established in Section \ref{sec_kin:stability_cont_model}. Since the relaxation parameter is taken to be very small ($\epsilon = 0.001$), the kinetic system is driven rapidly toward local equilibrium, effectively recovering the macroscopic multi-class dynamics.

\begin{figure}[!ht]
	\centering
	\includegraphics[scale=0.6]{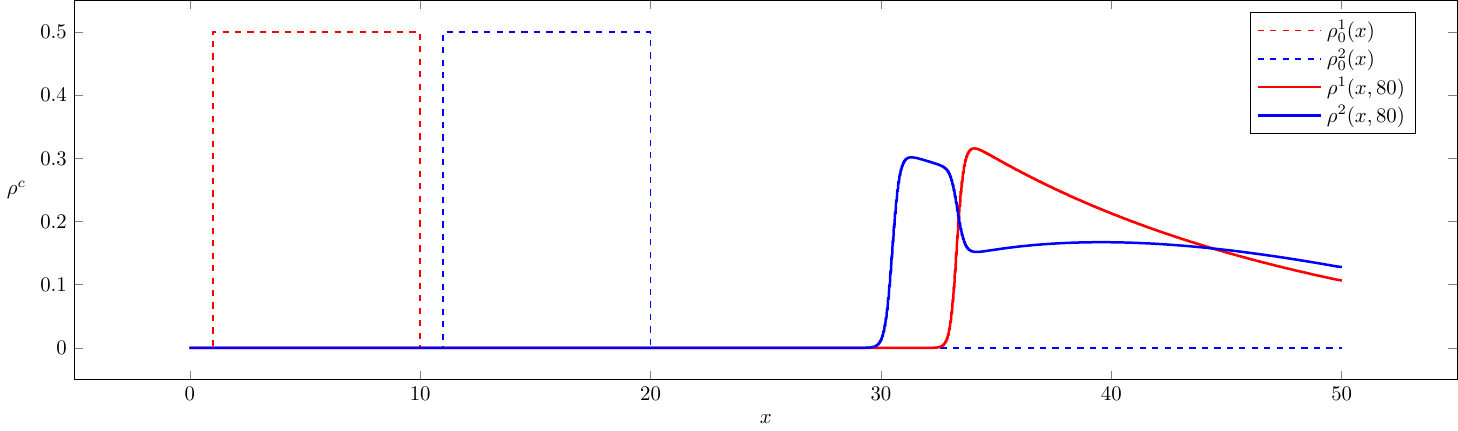}
    \caption{Class densities at time $t_f=80$ for the overtaking example in Test-case 2, Section \ref{sec_kin:numericalresults:RP_rel_FanWork}. The initial condition is represented with a dashed line on each graph.} 
	\label{fig:density:overtaking}
\end{figure}

\subsection{Test-case 3: periodic initial condition with relaxation}
\label{sec_kin:numericalresults:rel_periodic}

In this final test case, we consider the full kinetic system, including the relaxation term, subject to periodic boundary conditions, where the chosen fundamental diagrams follow Example \ref{ex_kin:stability_cont_model:different_VcRc}, with $V_1=0.6=R_2$, $V_2=1=R_1$ (see Figures \ref{fig:velocityfct:example_diffusion} and \ref{fig:stability_cond:V1_06_R2_06}-\ref{fig:stability_cond:S2_different_eps} for the velocity function and the stability regions, respectively). We define the initial density distributions for each vehicle class $c$ as a sinusoidal perturbation around a mean state,
\begin{equation*}
	\rho^c_0(x) = \rho^c_m + k_1 \sin(k_2\pi x),
\end{equation*}
where $k_1$ denotes the amplitude of the perturbation, $k_2$ the spatial frequency and $\rho^c_m$ the spatial average (or mean) of the class-specific density component $\rho^c$ (noted as such to avoid confusion with the cumulative densities $\bar{\rho}^c$). Similarly, let $r_m=\rho^1_m+\rho^2_m$ denote the average of the total density $r=\rho^1+\rho^2$. To measure damping or amplification of traffic waves over time, we define the total density perturbation decay $I(t)$ as 
\begin{equation*}
	I(t) = \int_0^1 |r(x,t)-r_m| dx.
\end{equation*}
For the numerical simulations, we restrict the space domain to the interval $[0,1]$, $M=20$ discrete velocities and $n_x=1000$ discretization points.

\begin{figure}[!t]
	\centering
	\includegraphics[scale=0.48]{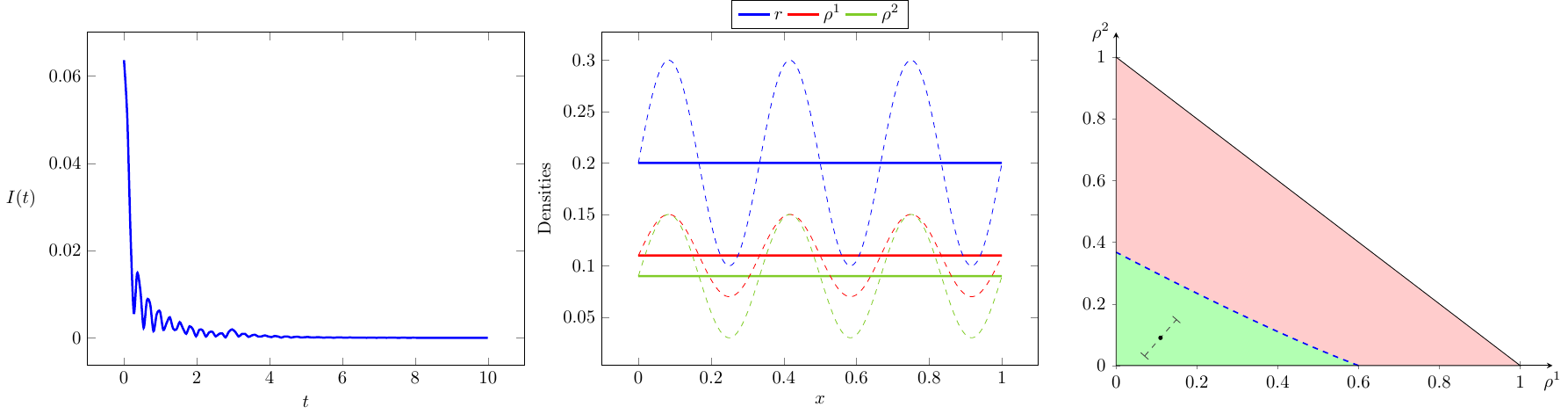}\\
    \includegraphics[scale=0.48]{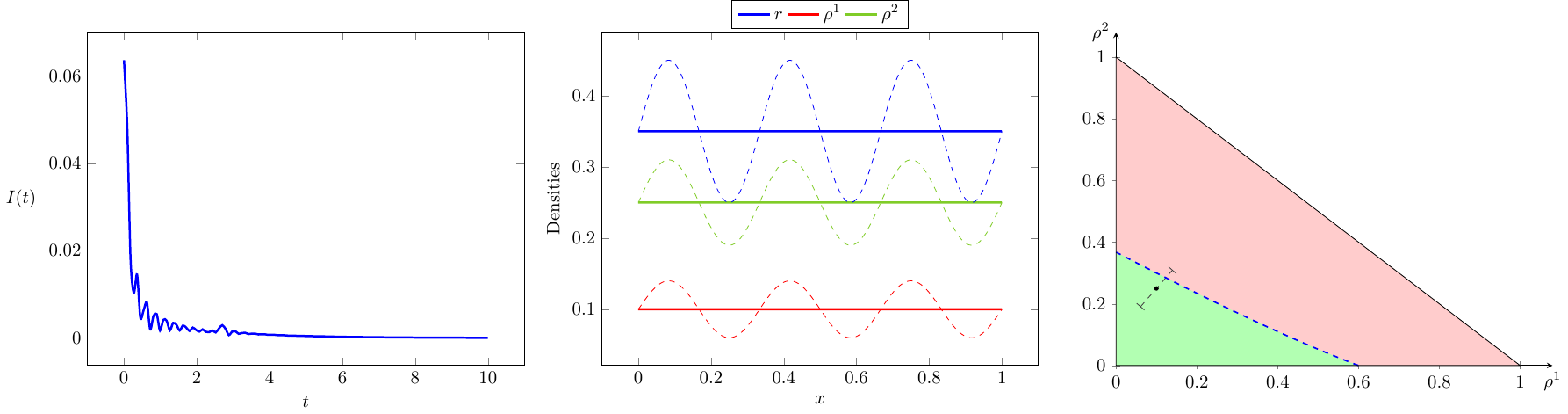}
    \caption{Stable solution for $\epsilon=1$. Plot of decay of solution $I(t)$ (left), densities at the beginning and end of time interval (center), and $\rho_0(x)$, $\rho(x,t_f=10)=\rho_m$, $x\in[0,1]$, represented within the positive and negative areas of $S_2$ (right). For the first row, $\rho_0(x) = (0.11 + 0.09\sin(6\pi x), 0.09 + 0.06\sin(6\pi x))$, while for the second row, $\rho_0(x) = (0.1 + 0.04\sin(6\pi x), 0.25 + 0.06\sin(6\pi x))$.}
	\label{fig:num_test:periodic:stable:eps1}
\end{figure}

\begin{figure}[!ht]
	\centering
    \includegraphics[scale=0.48]{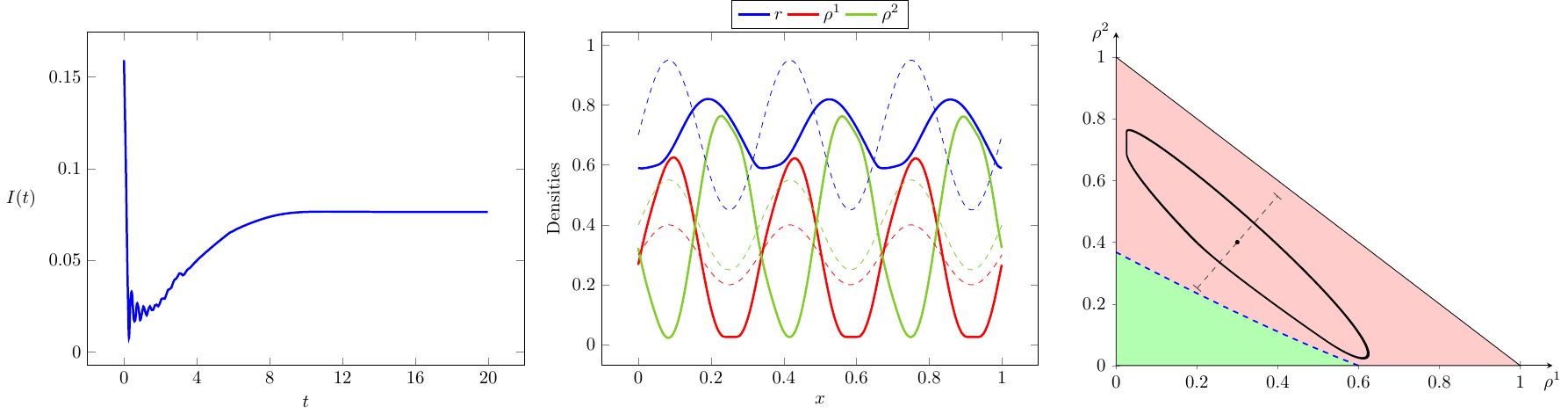}\\
    \includegraphics[scale=0.48]{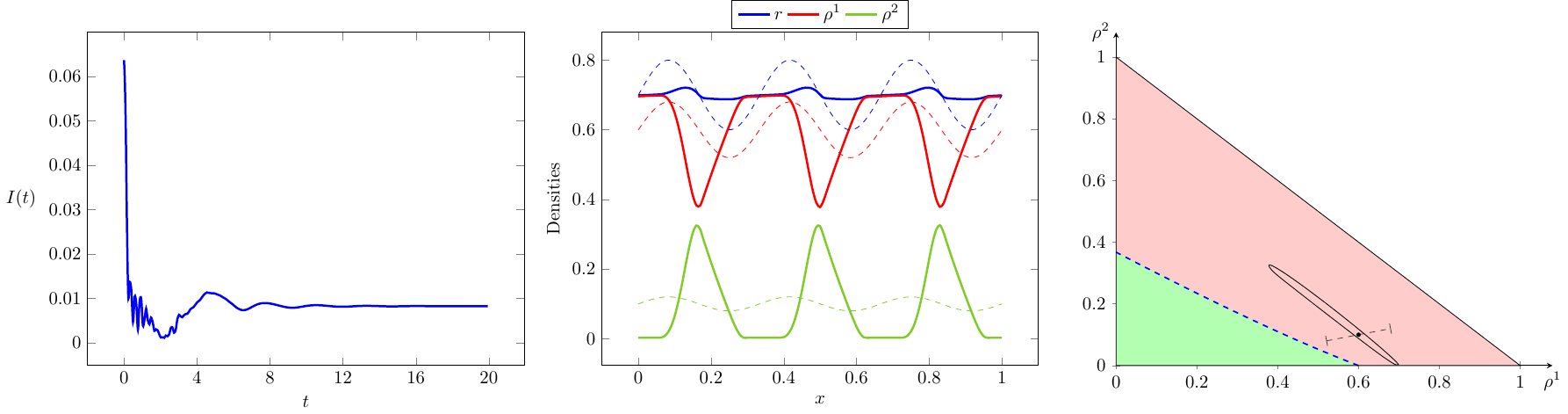}
    \caption{Unstable solution for $\epsilon=1$. Plot of decay of solution $I(t)$ (left), densities at the beginning and end of time interval (center), and $\rho_0(x)$, $\rho(x,t_f=20)$, $x\in[0,1]$, represented within the positive and negative areas of $S_2$ (right). For the first row, $\rho_0(x) = (0.3 + 0.1\sin(6\pi x), 0.4 + 0.15\sin(6\pi x))$, while for the second row, $\rho_0(x) = (0.6 + 0.08\sin(6\pi x), 0.1 + 0.02\sin(6\pi x))$.}
	\label{fig:num_test:periodic:unstable:eps1}
\end{figure}

\begin{figure}[!ht]
	\centering
	\includegraphics[scale=0.48]{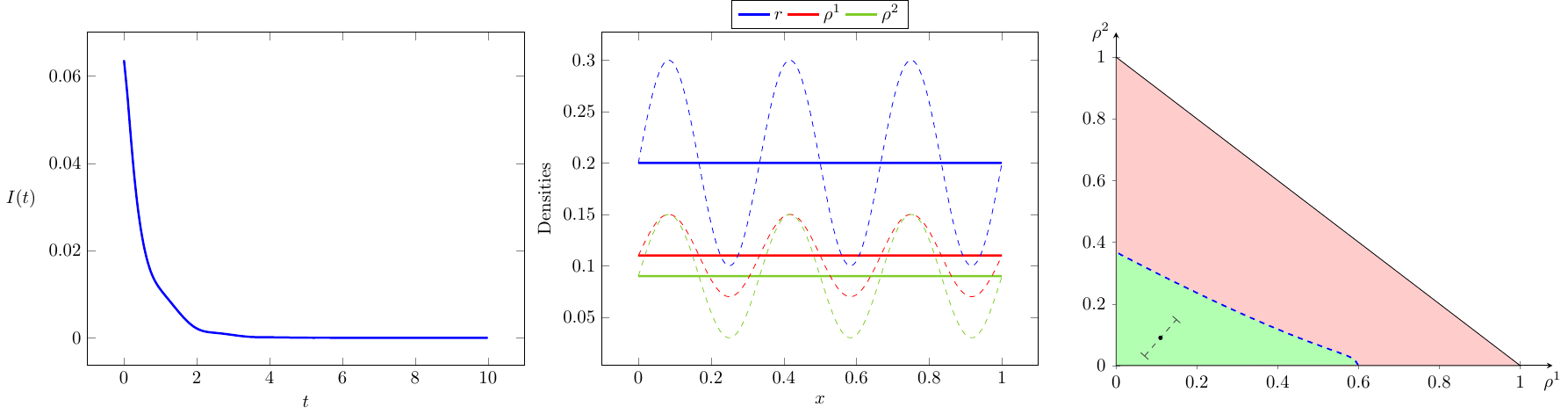}\\
    \includegraphics[scale=0.48]{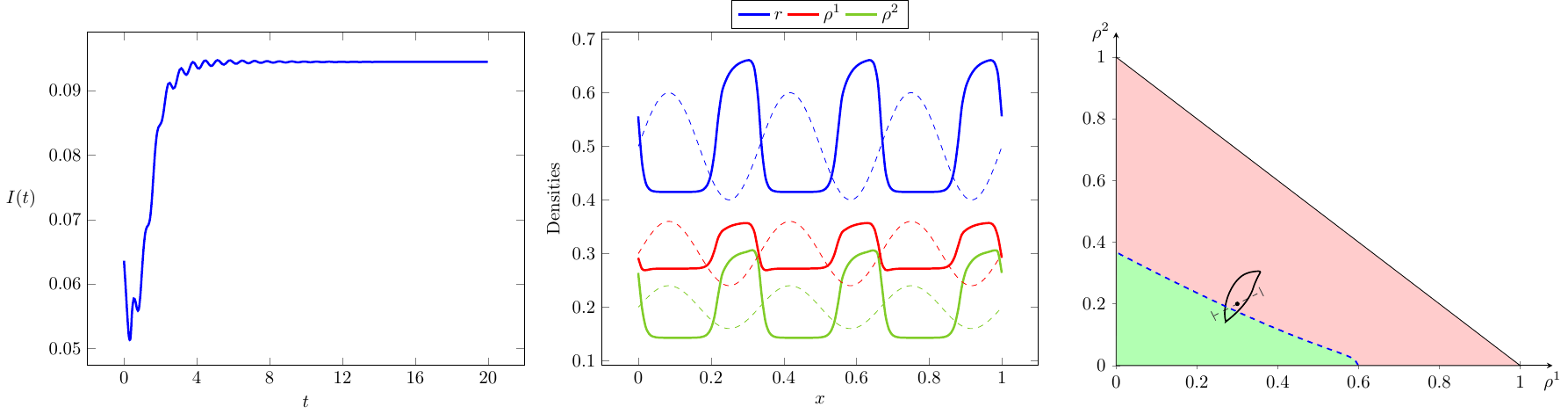}
    \caption{Stable (first row) and unstable (second row) solutions for $\epsilon=0.01$. Plot of decay of solution $I(t)$ (left), densities at the beginning and end of time interval (center), and $\rho_0(x)$, $\rho(x,t_f)$ ($t_f=10$ for the first row, $t_f=20$ for the second), $x\in[0,1]$, represented within the positive and negative areas of $S_2$ (right). For the first row, $\rho_0(x) = (0.11 + 0.09\sin(6\pi x), 0.09 + 0.06\sin(6\pi x))$, while for the second row, $\rho_0(x) = (0.3 + 0.06\sin(6\pi x), 0.2 + 0.04\sin(6\pi x))$.}
	\label{fig:num_test:periodic:stable_unstable:eps001}
\end{figure}

To validate the theoretical stability boundaries derived in Section \ref{sec_kin:stability_cont_model}, we observe the evolution of the system under both stable and unstable regimes in Figures \ref{fig:num_test:periodic:stable:eps1}, \ref{fig:num_test:periodic:unstable:eps1} and \ref{fig:num_test:periodic:stable_unstable:eps001} for different values of $\epsilon$. The plots on the left of each graph represent the decay of solution $I(t)$. The central plots show the class-specific and total densities at the beginning and at the end of the time interval $[t_0=0,t_f]$, represented by dashed and continuous lines, respectively. The graphs on the right show the densities $\rho(x,t_f)=(\rho^1,\rho^2)(x,t_f)$ for which the stability condition $S_2=S_2(\epsilon)$ defined in \eqref{eq_kin:stability_criterion} is either positive (green area) or negative (red area). The boundary within the triangular domain where $S_2=0$ is defined by a dashed blue line. The dashed gray lines in the triangular domain show the initial condition $\rho_0(x)=(\rho^1_0,\rho^2_0)(x)$, while the dot represents the average density $\rho_m=(\rho_m^1,\rho_m^2)$. 

In the stable cases (see Figure \ref{fig:num_test:periodic:stable:eps1} and the first row of Figure \ref{fig:num_test:periodic:stable_unstable:eps001}), i.e., where the average $\rho_m$ lies within the region where $S_2>0$, for $t_f=10$, there is a steady decay in $I(t)$ and a dampening of the traffic wave at the end of the time interval, resulting in a collapse of the solution to $\rho_m$. Conversely, in the unstable scenario (Figure \ref{fig:num_test:periodic:unstable:eps1} and second row of Figure \ref{fig:num_test:periodic:stable_unstable:eps001}), which corresponds to those initial conditions that are contained in the region where $S_2<0$, for $t_f=20$, there is not a decay of solution, but rather a persistent amplification of the initial perturbation. Instead of converging to the homogeneous mean state $\rho_m$, the instability shows the formation of stop-and-go traffic dynamics.

\section{Conclusions and outlook}
\label{sec_kin:conclusions}

In this paper, we have proposed and analyzed a multi-class extension of the kinetic traffic flow model in \cite{BK24discrete}, derived from a non-local Prigogine-Herman framework. 
By introducing a discrete-velocity formulation, we effectively captured the heterogeneous dynamics of traffic, specifically the complex interactions, braking behaviors, and overtaking maneuvers between different vehicle classes.

We proved the hyperbolicity and total linear degeneracy of the system and, where possible, identified the corresponding conservative variables.
Furthermore, to overcome the analytical and numerical challenges posed by non-conservative products arising from the interactions between classes of vehicles, we successfully implemented a path-conservative finite volume scheme. 
Our numerical tests validate the robustness and accuracy of this numerical approach in resolving the discontinuities inherent in multi-class vehicular dynamics. 
Additionally, we established the formal connection between the kinetic framework and its macroscopic limit. 
The numerical simulations also successfully reproduce realistic macroscopic behaviors, such as faster vehicle classes overtaking slower ones, and numerically confirm the theoretical stability boundaries derived through the Chapman-Enskog expansion.

Future research will focus on extending this multi-class discrete-velocity model to complex transportation networks, which will require the formulation of appropriate coupling conditions and numerical fluxes at junctions, and the study of  macroscopic approximations  of the model, similar to \cite{BK24macro}. 

\section*{Acknowledgments}

This work was funded by the European Union's Horizon Europe research and innovation programme under the Marie Sklodowska-Curie Doctoral Network Datahyking (Grant No. 101072546).

\section*{Data availability statement}

The source code used to generate these results is openly available on GitLab at \url{https://gitlab.com/carmen.mezquitanieto/multi-class-kinetic-traffic-flow-model/}.

\begingroup
	\small
	\bibliographystyle{siam}
	\bibliography{bibarticle_kinetic_multiclass}
\endgroup

\end{document}